\def\BBox{\kern  -0.2cm\hbox{\vrule width 0.2cm height 0.2cm}}
\newtheorem{theorem}{Theorem}
\newtheorem{lemma}[theorem]{Lemma}
\newtheorem{prop}[theorem]{Proposition}
\newtheorem{defi}[theorem]{Definition}
\newtheorem{obs}[theorem]{Observation}
\newcommand{\Rd}{\mathbb{R}^d}
\newcommand{\bc}{\begin{center}}
\newcommand{\ec}{\end{center}}
\newcommand{\ba}{\begin{align*}}
\newcommand{\ea}{\end{align*}}
\newcommand{\be}{\begin{enumerate}}
\newcommand{\ee}{\end{enumerate}}
\newcommand{\bi}{\begin{itemize}}
\newcommand{\ei}{\end{itemize}}
\def\fl{\mathcal{F}}
\newenvironment{dedication}
\title{A note on the intersection property for flat boxes and boxicity  in $\mathbb{R}^d$}
\author{Hector Ba\~nos\thanks{hdbanoscervantes@alaska.edu}\\ 
{\small University of Alaska, Fairbanks} \\
Deborah Oliveros\thanks{dolivero@matem.unam.mx}\\
{\small  Instituto de Matem\'{a}ticas}\\
{\small  Universidad Nacional Aut\'{o}noma de M\'{e}xico, M\'{e}xico}
\\}
\begin{document}
\maketitle

\begin{dedication}
\centerline{Celebrating the 70th birthday of T. Bisztriczky, G. Fejes Toth, and A. Makai} 
\end{dedication}

\begin{abstract}
By extending the definition of boxicity,  we extend a Helly-type result given by Danzer and Gr\"unbaum on 2-piercings of families of boxes in $d$-dimensional Euclidean space by lowering the dimension of the boxes in the ambient space. 
\end{abstract}

 {\bf Key words.} Boxicity, Helly-type results, intersection graph of boxes, $p$-boxicity.
 %4, 9, 11, 15, 24, 25, 26, 27, 36
\section{Introduction}
%A $d$-dimensional box is a rectangular parallelotope whose edges are parallel to the coordinate axes in $\Rd$. 
%A family of boxes is called $n$-pierceable if there exists a set of $n$ points such that each box contains at least one 
%of these points. 
A $p$-box in $\Rd$ is a rectangular $p$ dimensional parallelotope whose edges are parallel to the coordinate axes in $\Rd$, where $d\geq p$. 
A family of boxes is called $n$-pierceable if there exists a set of $n$ points such that each box contains at least one 
of these points. 
 
For positive integers $d$ and $n$, what is the smallest number $h = h(d, n)$ such that the following property holds: 

\noindent``Every finite family ${\fl }$ of $d$--boxes in $\Rd$ is $n$-pierceable if and only if, 
every  subfamily of cardinality $h$ is $n$-pierceable.'' 

This problem was originally studied by Danzer and Gr\"{u}nbaum in \cite{DanzGrum}, showing in particular
that the following theorem holds for piercing number $2$:

\begin{theorem}\label{DanzGrum} \cite{DanzGrum}For  $d$ odd, $h(d, 2) = 3d$  and for $d$ even $h(d, 2) = 3d -1$. 
\end{theorem}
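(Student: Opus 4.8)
The plan is to recast $2$-pierceability as a graph-colouring property and then reduce the theorem to a question about odd cycles. I would first record that an axis-parallel $d$-box is a product of $d$ intervals, so Helly's theorem on the line, applied coordinate by coordinate, shows that a family of $d$-boxes has a common point exactly when its members pairwise intersect. Hence a finite family $\mathcal F$ of $d$-boxes is $2$-pierceable iff it splits into two pairwise-intersecting subfamilies, i.e.\ iff the \emph{disjointness graph} $G(\mathcal F)$ --- vertices the boxes, with an edge $BB'$ whenever $B\cap B'=\emptyset$ --- is bipartite. Since $2$-pierceability is hereditary, a standard argument identifies $h(d,2)$ with the largest size of a family that is not $2$-pierceable although all of its proper subfamilies are; equivalently, of a family whose disjointness graph is non-bipartite while every proper induced subgraph of it is bipartite. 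As the vertex-minimal non-bipartite graphs are precisely the odd cycles, the theorem reduces to: determine the largest odd $n$ for which $C_n$ is the disjointness graph of a family of $d$-boxes in $\Rd$.

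For the upper bound, I would take $d$-boxes $B^1,\dots,B^n$ with disjointness graph $C_n$, $n\ge5$, and for each coordinate $c$ let $E_c$ be the graph on the boxes in which $ij$ is an edge iff the $c$-th intervals of $B^i,B^j$ are disjoint. Since the edge set of $G(\mathcal F)$ is the union of the $E_c$, each $E_c$ is a subgraph of $C_n$, and each is also the disjointness graph of $n$ intervals on a line. The crucial step is to show such an $E_c$ has at most three edges, and these are consecutive edges of $C_n$. For this I would use two facts about disjointness graphs of collinear intervals: they contain no induced $2K_2$ (writing each disjoint pair in left-to-right order, an induced $2K_2$ would force a contradictory cycle of inequalities $\inf I^d\le\sup I^a<\inf I^b\le\sup I^c<\inf I^d$), and the path $P_5$ is not one of them (its complement contains an induced $4$-cycle, hence is not chordal, hence not an interval graph). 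The first fact, combined with $E_c\subseteq C_n$ and $n\ge5$, forces all edges of $E_c$ into one connected component, i.e.\ a single subpath of $C_n$; the second then bounds that subpath by three edges. Summing over the $d$ coordinates gives $n=|E(C_n)|\le 3d$, and since $n$ must be odd this yields $n\le 3d$ for $d$ odd and $n\le 3d-1$ for $d$ even.

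For the matching lower bound I would build the extremal family explicitly. One checks that three consecutive edges of $C_n$ --- a path $P_4$, with all remaining vertices isolated --- do arise as the disjointness graph of $n$ intervals: assign the four ``active'' boxes, in the order along the path, intervals such as $[\tfrac12,3]$, $[4,8]$, $[0,1]$, $[2,5]$, and every other box an interval containing all of these. Then I would partition the $n$ edges of $C_n$ into consecutive blocks of sizes in $\{1,2,3\}$, with as few blocks as possible, namely $\lceil n/3\rceil$; since $\lceil 3d/3\rceil=\lceil(3d-1)/3\rceil=d$, this uses exactly $d$ blocks. Realising the $c$-th block in coordinate $c$ by the interval construction above produces $d$-boxes whose disjointness graph is exactly $C_n$. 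Taking $n=3d$ for $d$ odd and $n=3d-1$ for $d$ even --- both odd --- gives a minimal non-$2$-pierceable family of the required size, which together with the upper bound proves the theorem.

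I expect the main obstacle to be the structural lemma on $E_c$: pinning down which subgraphs of $C_n$ can be disjointness graphs of collinear intervals, and in particular the sharp facts that $P_4$ is realizable while $P_5$ is not and that $E_c$ cannot have two edge-bearing components, is exactly what makes three edges per coordinate the correct bound and produces the factor $3d$.
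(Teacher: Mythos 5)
The paper cites this theorem from Danzer--Gr\"unbaum without reproving it, but your argument is correct and follows essentially the same graph-theoretic route (due to \cite{woe}) that the paper adopts and then generalizes: reduce $2$-pierceability to bipartiteness of the disjointness graph via the Helly property in each coordinate, identify the critical families with odd cycles, and bound each coordinate's contribution to the cycle by three consecutive edges using the induced-$2K_2$ and induced-$P_5$ exclusions (the paper's Observation \ref{conditions}). The only loose ends are trivial: the case where some $E_c$ is all of $C_n$ (excluded since $C_n^c$ is not an interval graph) and the explicit interval gadgets for blocks of size $1$ and $2$.
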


Results of the type \textquotedblleft if every subset of cardinality $\mu $
of  ${\fl }$  is $n$--pierceable, then the entire family 
${\fl }$ is $n$--pierceable%
\textquotedblright\ are called Helly--Gallai type theorems. Clearly Helly--Gallai type theorems 
are a natural generalization of Helly's classical theorem for $n=1$  when ${\fl }$ 
is a family of convex sets in $\Rd$ and $\mu =d+1$. Results of this type have been widely studied in 
different settings (see, for instance, surveys such as \cite{DanzGrum}, \cite{Echoff93}). However, in the same paper
\cite{DanzGrum} of Danzer and Gr\"{u}nbaum they show that such theorems do not
exist in general, even for the case of families of $d$-boxes in $\Rd$.

In \cite{woe} the authors give an alternative proof of Theorem \ref{DanzGrum} using intersection graphs of families of boxes, 
the fact that for the case $n=1$, $h(d,1) = 2$ in any dimension, analyzing the structure of the complement of odd cycles and
the chromatic number $\chi(G)$ of a graph $G$; see Proposition \ref{2hp} in this paper.

In 1969 F.S. Roberts \cite{ROB} extended the definition of  interval graph  to higher dimensions by considering the intersection graph 
of a family of $d$--boxes in $d$--dimensional Euclidean space $\Rd$ by defining the boxicity of a graph G, denoted $Box (G)$, as the smallest 
positive integer $d$ for which a graph $G$ is the intersection graph of a family of $d$--boxes in $\Rd$. For further reference in this topic and part of 
it's state of art, see  \cite{sunil1}, \cite{sunil2},  \cite{esperet}, and \cite{sunil3}. 

In his work, Roberts gave a characterization of the boxicity of a noncomplete graph $G$ in terms of interval graphs  by showing the following theorem:

\begin{theorem}\label{Roberts}\cite{ROB}
The boxicity of a noncomplete graph $G$ is the minimum positive integer $k$ such that there exists interval graphs $F_1,F_2,\dots ,F_k$ such that $G = F_1\cap F_2\cap \cdots \cap F_k$.
\end{theorem}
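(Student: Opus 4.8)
The plan is to establish Theorem~\ref{Roberts} by proving the two inequalities $Box(G)\le \beta(G)$ and $\beta(G)\le Box(G)$, where $\beta(G)$ denotes the least positive integer $k$ for which $G$ is the edge-wise intersection $F_1\cap F_2\cap\cdots\cap F_k$ of interval graphs $F_1,\dots,F_k$, all taken on the common vertex set $V(G)$. Before that, one should note that both quantities are well defined: for each non-edge $uv$ of $G$, the graph obtained from the complete graph on $V(G)$ by deleting the single edge $uv$ is an interval graph (place $u$ and $v$ on two disjoint intervals and every remaining vertex on one long interval meeting both), and $G$ is the intersection of these finitely many interval graphs; the same family of boxes also furnishes a box representation of $G$. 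The whole argument then rests on one elementary remark: an axis-parallel $k$-box is precisely a product $I^1\times\cdots\times I^k$ of $k$ closed intervals, and two such boxes meet if and only if their $j$-th factors meet for every coordinate $j$.

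First I would prove $\beta(G)\le Box(G)$. Put $d=Box(G)$ and fix $d$-boxes $\{B_v\}_{v\in V(G)}$ in $\RR^d$ whose intersection graph is $G$, writing $B_v=I_v^1\times\cdots\times I_v^d$ with each $I_v^j$ a closed interval. For each coordinate $j$ the map $v\mapsto I_v^j$ is an interval representation of a graph $F_j$ on $V(G)$, so $F_j$ is an interval graph. For distinct $u,v\in V(G)$ one has $uv\in E(G)$ if and only if $B_u\cap B_v\neq\emptyset$, which holds if and only if $I_u^j\cap I_v^j\neq\emptyset$ for every $j$, i.e.\ if and only if $uv\in E(F_j)$ for every $j$. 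Hence $G=F_1\cap\cdots\cap F_d$, so $\beta(G)\le d=Box(G)$.

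Next I would prove $Box(G)\le\beta(G)$. Put $k=\beta(G)$ and fix interval graphs $F_1,\dots,F_k$ on $V(G)$ with $G=F_1\cap\cdots\cap F_k$. Choose an interval representation $v\mapsto I_v^j$ of each $F_j$ (for a finite graph the intervals may be taken nondegenerate), and set $B_v:=I_v^1\times\cdots\times I_v^k\subseteq\RR^k$, an axis-parallel $k$-box. As before, for distinct $u,v$ the box $B_u$ meets $B_v$ if and only if $I_u^j\cap I_v^j\neq\emptyset$ for every $j$, i.e.\ if and only if $uv\in E(F_j)$ for every $j$, i.e.\ if and only if $uv\in E(G)$. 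Thus $\{B_v\}_{v\in V(G)}$ realizes $G$ as the intersection graph of $k$-boxes in $\RR^k$, so $Box(G)\le k=\beta(G)$. Combining the two inequalities gives $Box(G)=\beta(G)$, which is the statement.

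I do not expect a genuine obstacle here: the theorem is a bookkeeping consequence of the coordinate-wise decomposition of box representations, with interval graphs playing the role of the one-dimensional case. The points that actually require care are purely formal --- insisting that all the $F_j$ share the vertex set $V(G)$ and that ``intersection'' means intersection of edge sets, not of vertex sets --- together with the degenerate situations, which is exactly where the hypothesis that $G$ be noncomplete is used: under the convention that gives the complete graph boxicity $0$ (a point in $\RR^0$) complete graphs must be excluded, while under the convention adopted above, where $Box(G)$ is always a positive integer, the equality holds verbatim for every $G$, since both $Box(G)$ and $\beta(G)$ equal $1$ precisely when $G$ is an interval graph.
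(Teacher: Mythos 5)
The paper gives no proof of Theorem~\ref{Roberts}; it is quoted from Roberts \cite{ROB} as a known characterization. Your argument is correct and is the standard one --- project a box representation coordinatewise to obtain the interval graphs $F_j$, and conversely take products of interval representations to build the boxes, using that two axis-parallel boxes meet iff they meet in every coordinate --- which is precisely the technique the paper itself deploys, in generalized form, in its proof of Lemma~\ref{lemon}; your treatment of the noncomplete hypothesis and of degenerate intervals is also sound.
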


In this paper, we introduce the definition of \emph{$p$-boxicity}, $Box_p(G)$, as the minimum dimension $d$
such that a graph $G$ is realizable as the intersection graph of $p$-boxes in $\Rd$ (boxes of dimension $p$ in $\Rd$), and give a generalization of Roberts' result (Theorem \ref{Roberts})  in terms of $p$-boxes. We believe this definition is interesting in its own right and may yield to further research activity. Furthermore, we extend Danzer's and Gr\"{u}nbaum's  theorem for piercing  number $2$ to the family of flat (that is, not necessarily full dimensional) boxes using this new definition. In particular, we prove the following theorem:

\begin{theorem}\label{flatboxes}
 Given any family $\fl$ of $m$-boxes in $\Rd$, $m\leq d$, $\fl$ is $2$-pierceable if and only if every  
subfamily of cardinality $h=h(d,m,2)$ is $2$-pierceable provided

\noindent $h(d,m,2)=\left\{ \begin{array}{rcl}
5& \mbox{for}\noindent&m=1\\
7& \mbox{for}\noindent&m=2\\
3m& \mbox{for}\noindent&m\neq 1$  $odd\\
 3m-1 & \mbox{for}&m\neq 2$ $even\end{array} \right.$
%\noindent\textbf{iii)} $h(d,m,n)=\aleph_0$ for $d\geq m\geq 2$, $n\geq 3$, and $h(d,m,n)\neq h(d,2,3)$.
\end{theorem}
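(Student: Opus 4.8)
The plan is to reduce the statement about $m$-boxes in $\mathbb{R}^d$ to the known full-dimensional result (Theorem \ref{DanzGrum}) by exploiting the relationship between piercing and the combinatorial structure of the intersection pattern, mediated by the new notion of $p$-boxicity and the graph-theoretic machinery of \cite{woe}. First I would observe that an $m$-box in $\mathbb{R}^d$ is, in each of the $d$ coordinate directions, either a bounded interval (in the $m$ ``active'' directions) or the whole line (in the $d-m$ ``flat'' directions, where the box has zero thickness the projection is a point, but for piercing purposes what matters is which axis-parallel slabs are constrained). The key reduction: a family $\fl$ of $m$-boxes is $2$-pierceable if and only if one can partition $\fl$ into two subfamilies each with a common point, and having a common point is governed by the $1$-dimensional projections (Helly for boxes: a family of boxes has a common point iff every pair does, coordinatewise). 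So the $2$-piercing problem becomes a question about $2$-colorability of the ``conflict hypergraph'' — equivalently, about the intersection graph of the boxes and its complement — exactly as in the approach of \cite{woe} to Proposition \ref{2hp}.

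Next I would set up the graph-theoretic translation. Following \cite{woe}, associate to $\fl$ the intersection graph $G$; then $\fl$ is $2$-pierceable iff the complement $\overline{G}$ admits a certain clique-cover / coloring structure compatible with the box geometry, and the failure of Helly number $h$ corresponds to a minimal non-$2$-pierceable subfamily, which (again as in \cite{woe}) forces $\overline{G}$ restricted to that subfamily to contain an induced odd cycle or a small exceptional configuration. The essential new input is that $m$-boxes in $\mathbb{R}^d$ realize a strictly smaller class of intersection graphs than $d$-boxes in $\mathbb{R}^d$ — indeed, by the $p$-boxicity version of Roberts' theorem (the generalization of Theorem \ref{Roberts} promised in the paper), the intersection graph of $m$-boxes in $\mathbb{R}^d$ is an intersection of at most $d$ interval graphs of which at most $m$ are ``nontrivial'' in a suitable sense, or more precisely the relevant constraint is that a minimal non-$2$-pierceable family of $m$-boxes cannot be ``too spread out'' across coordinate directions. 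I would use this to bound the size of a minimal non-$2$-pierceable subfamily: each such subfamily, after projecting to the $m$ active coordinates of a witnessing configuration, behaves like a family of $m$-boxes in $\mathbb{R}^m$, giving the bound $3m$ (odd) or $3m-1$ (even) from Theorem \ref{DanzGrum}; the small-case anomalies $h(d,1,2)=5$ and $h(d,2,2)=7$ arise because when $m$ is tiny the ambient dimension $d$ can still be large, so one extra direction of freedom is available and the odd-cycle analysis yields $C_5$ and $C_7$ configurations (of sizes $5$ and $7$) that do not collapse, whereas for $m\geq 3$ they are dominated by the $3m$ bound.

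Concretely the steps, in order: (1) prove the ``only if'' direction trivially ($2$-pierceable $\Rightarrow$ every subfamily is); (2) prove the generalized Roberts theorem characterizing $Box_m$, which lets us encode $m$-boxes as intersections of interval graphs with a dimension-$m$ constraint; (3) translate $2$-piercing into a statement about $\overline{G}$ using Helly for boxes and the clique-cover reformulation of \cite{woe}; (4) show that a minimal non-$2$-pierceable family of $m$-boxes in $\mathbb{R}^d$ can be realized, up to the relevant combinatorics, as a family of $m$-boxes in $\mathbb{R}^m$, so Theorem \ref{DanzGrum} gives $h\le 3m$ or $3m-1$; (5) handle $m=1,2$ separately, exhibiting the $C_5$- and $C_7$-type extremal families in $\mathbb{R}^d$ for $d$ large and checking they are not $2$-pierceable but all proper subfamilies are, giving the lower bounds $5$ and $7$; (6) confirm the upper bounds $5,7$ in these cases via the odd-cycle structure of $\overline{G}$. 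I expect step (4) — the claim that the ``flat'' directions can be collapsed without changing the $2$-piercing obstruction, so that flatness genuinely only costs what the $m$ active directions cost — to be the main obstacle, since one must rule out interactions between many nearly-orthogonal flat boxes that have no analogue in the full-dimensional setting; the small-$m$ exceptions are precisely the places where this collapse is obstructed, so pinning down exactly when it fails is the crux of the argument.
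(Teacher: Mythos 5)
Your framework in steps (1)--(3) and (5) matches the paper: the reduction of $2$-piercing to the chromatic number of the complement of the intersection graph (Proposition \ref{2hp}), the observation that a minimal non-$2$-pierceable family has complement an odd cycle (Observation \ref{obsi}), the characterization of realizability as $m$-boxes in $\RR^d$ via intersections of interval graphs (the $p$-slim box property, Lemma \ref{lemon}), and the explicit $C_5^c$ and $C_7^c$ constructions for the lower bounds when $m=1,2$. But your step (4) --- collapsing the flat directions so that a minimal non-$2$-pierceable family of $m$-boxes in $\RR^d$ behaves like one of $m$-boxes in $\RR^m$, and then quoting Theorem \ref{DanzGrum} --- is a genuine gap, and in fact the literal reduction is false. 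The cases $m=1,2$ already refute it: $C_5^c$ is realizable as $1$-boxes in $\RR^2$ although $h(1,1,2)=3$, and $C_7^c$ is realizable as $2$-boxes in $\RR^3$ although $h(2,2,2)=5$, so the extra ambient dimensions strictly enlarge the class of realizable complements of odd cycles. You treat these as isolated anomalies, but you give no argument for why the enlargement stops at $m=3$; that the thresholds for $m\geq 3$ happen to coincide with the full-dimensional ones is precisely the theorem's main content, not something that can be imported from \cite{DanzGrum}.

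What the paper does instead of your step (4) is a direct forbidden-subgraph analysis: Lemmas \ref{C7}, \ref{C9}, and \ref{Cs} show that $C_s$ is $m$-forbidden in $\RR^d$ for $s\geq 7$ ($m=1$), $s\geq 9$ ($m=2$), $s\geq 3m+2$ ($m$ odd, $m>2$), and $s\geq 3m+1$ ($m$ even, $m>2$). The proof of Lemma \ref{Cs} works inside the slim-box decomposition $G=\cap_i F_i$: it partitions the path $P_{v_1}(C_s)$ into missing subpaths distributed among the $p$ interval graphs not containing $[N(v_1)]$, sets up the counting system \eqref{system}, and derives contradictions via the ``contiguous'' and ``extreme'' conditions (Lemmas \ref{Claim 1} and \ref{Claim 2}) together with the fact that an interval graph cannot omit a path of four consecutive cycle edges (Observation \ref{conditions}). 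Your proposal contains no substitute for this analysis, and it is also what supplies the upper bounds $5$ and $7$ in the small cases, which your step (6) asserts but does not prove.
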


\section{Boxicity and $p$-boxicity}
Given a finite family $\fl =\{ X_1,X_2,\dots, X_k\}$ of $p$-boxes (boxes of dimension $p$) in $\Rd$, a graph $G=(V(G),E(G))$ is the intersection graph of $\fl$ if  $V(G):= \{ x_1,x_2,\dots,x_k\}$ and $(x_i,x_j)\in E(G)$ if and only if $X_i\cap X_j\not= \emptyset$.

Recall that given a graph $G$, its {\textit p-boxicity} $Box_p(G)$ is the minimum dimension $d$ such that $G$ is the intersection graph of family of  $p$-boxes in $\mathbb{R}^d$. Observe that $p\leq d$ and  if $p=d$ then $Box_p(G)=Box(G)$. Note that for any graph $G$ and $p\in \mathbb{N}$, if  $p\geq Box(G)$ then $Box_{p}(G)=p$. In general $Box(G)\leq Box_{p}(G)$. We will say that a family $\fl$ of $p$-boxes is a {\it realization} 
of $G$ if $G$ is the intersection graph of $\fl$.

To illustrate some of these statements, observe that for a cycle $C_s$, $s\geq 4$, we have $Box_1(C_s)=2$ and $Box_2(C_s)=Box(C_s)$ (see figure \ref{figc5}).
If, however, $G$ is the graph shown on the left of  Figure \ref{bg1}, then $Box_1(G)=\infty $ but $Box(G)=2$. Figure \ref{badgraph2} shows a graph $G$ where $Box(G)=2$ and $Box_1(G)=3$; in this case the realization of $G$ as $1$-boxes in $\mathbb{R}^3$ can be thought of as the edges of a cube.

\begin{figure}
\begin{center}
\includegraphics[scale=.3]{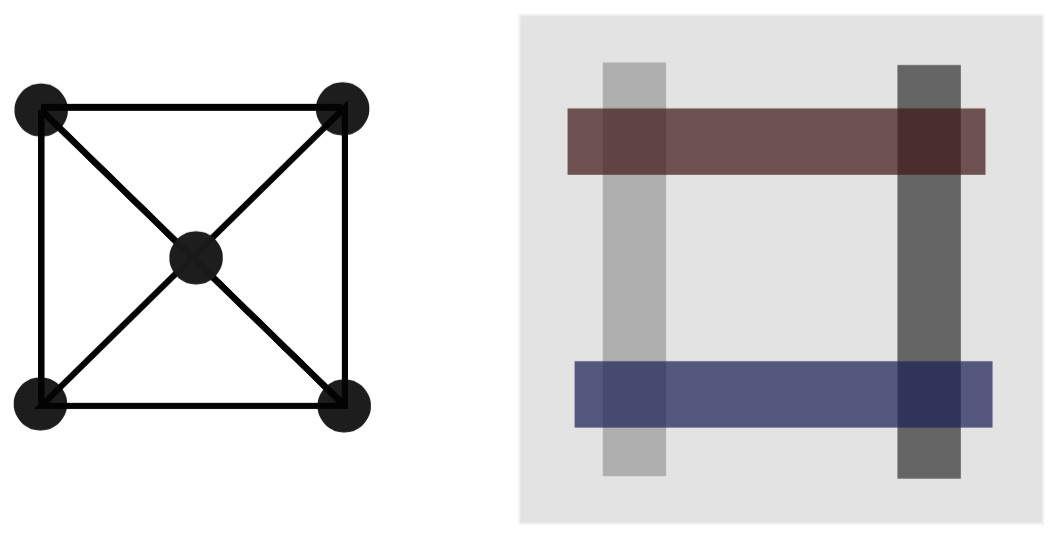}
\caption{Graph $G$ and it realization as 2-boxes in $\mathbb{R}^2$.}\label{bg1}
\end{center}
\end{figure}

\begin{figure}[h]
\begin{center}
\includegraphics[scale=.38]{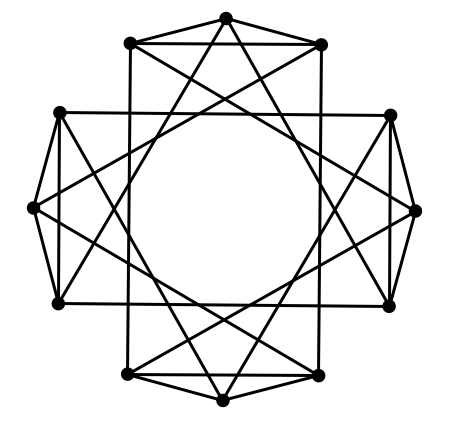}\\
\includegraphics[scale=.7]{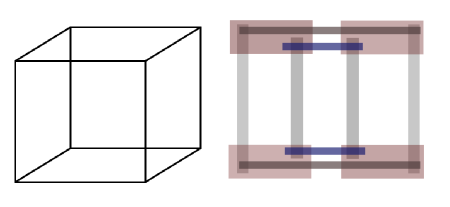}
\caption{Top: a graph $G$ where $Box(G)=2$ and $Box_1(G)=3$. Bottom left: the realization of $G$ as $1$-boxes in $\mathbb{R}^3$; each 1--box is an edge of the cube.
Bottom right: the realization of $G$ as 2-boxes in $\mathbb{R}^2$.}\label{badgraph2}
\end{center}
\end{figure}

For us, all graphs $G=(V(G),E(G))$ are finite and simple, i.e. with no loops or multiple edges. Recall that a subgraph $H\subset G$ is an induced subgraph of $G$ if $V(H)\subset V(G)$ and if $x_i,x_j\in V(H)$ and $(x_i,x_j)\in E(G)$ then $(x_i,x_j)\in E(H)$. The neighborhood of a vertex $v\in V(G)$ is the set of vertices adjacent to $v$, $N(v):=\{ w\in V(G)|(v,w)\in E(G)\}$. We will define the closure of the neighborhood of $v$  as the set of all possible edges in $N{_G}(v)$, $[N{_G}(v)]:=\{ (u,w)| u,w\in N(v)\}$.  When it is clear from  context, the graph induced by  the neighborhood of a vertex will be also referred as the neighborhood of a vertex. As usual $K_t$ will denote the complete graph with $t$ vertices. 

One consequence of the following definition and lemma is that we can determine when a graph $G$ that satisfies $Box(G)=d$ also satisfies $Box_p(G)=d$. 

\begin{defi}\label{slimproperty} We will say that a graph $G$ satisfies the $p$-slim box property in $\mathbb{R}^d$ if the following two conditions hold:
\begin{itemize}
\item[i)] $G$ is an intersection graph of $d$-boxes, i.e.  $G=\cap_{i\in I} F_i $ where every $F_i$ is an interval graph and $I:=\{1,2,\dots d\}$.
\item[ii)] For all $v\in V(G),$ there is a subset $J_v\subset I:=\{1,2,\dots d\}$ of $d-p$ indices  such that the neighborhood of $v$ is a complete subgraph,  $[N_{F_i}(v)]\subset  E(F_i)$, ${i\in J_v}$.
\end{itemize}
%If $G$ satisfy the $p$-slim property for every vertex $v$ of $G$ for the same interval graphs in (i), we say that $G$ has the $p$-slim box property in $\mathbb{R}^d$. 
\end{defi}

\begin{lemma}\label{lemon}
A graph $G$ is realizable as $p$-boxes in $\mathbb{R}^d$ $(p\leq d)$ if and only if $G$ has the $p$-slim property in $\mathbb{R}^d$.
\end{lemma}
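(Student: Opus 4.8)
The plan is to prove both directions of the equivalence by translating between geometric realizations of $G$ as $p$-boxes in $\R^d$ and combinatorial data about interval-graph factorizations of $G$, using Roberts' characterization (Theorem \ref{Roberts}) as the bridge.

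\textbf{($\Rightarrow$) From a $p$-box realization to the $p$-slim property.} Suppose $G$ is the intersection graph of a family $\fl=\{X_1,\dots,X_k\}$ of $p$-boxes in $\R^d$. Each $X_i$ is a product of $d$ intervals $I_i^{(1)}\times\cdots\times I_i^{(d)}$, but since $X_i$ has dimension only $p$, exactly $d-p$ of these factors are degenerate (single points); let $J_i\subset\{1,\dots,d\}$ be the set of those $d-p$ coordinates. For each coordinate $\ell\in\{1,\dots,d\}$, project every box to the $\ell$-th axis: this gives a family of (possibly degenerate) intervals on $\R$, whose intersection graph $F_\ell$ is an interval graph. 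First I would check that $G=\bigcap_{\ell=1}^d F_\ell$: two boxes meet iff all their coordinate projections meet, which is exactly the statement that $(x_i,x_j)$ is an edge of $G$ iff it is an edge of every $F_\ell$. This gives condition (i) of Definition \ref{slimproperty}. For condition (ii), fix a vertex $v$ corresponding to $X_v$, and take $J_v:=J_v$ (the indices where $X_v$ is degenerate). For $\ell\in J_v$, the projection of $X_v$ to axis $\ell$ is a single point $t$; if $u,w\in N(v)$ then $X_u$ and $X_w$ both meet $X_v$, so their $\ell$-projections both contain $t$, hence they intersect each other in the $\ell$-coordinate, i.e.\ $(u,w)\in E(F_\ell)$. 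Thus $[N_{F_\ell}(v)]\subset E(F_\ell)$ for all $\ell\in J_v$, and $|J_v|=d-p$, establishing the $p$-slim property.

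\textbf{($\Leftarrow$) From the $p$-slim property to a $p$-box realization.} Now suppose $G$ has the $p$-slim property with interval graphs $F_1,\dots,F_d$ and index sets $J_v$. By Roberts' theorem-style reasoning, $G=\bigcap_{\ell=1}^d F_\ell$ gives a realization of $G$ as (full $d$-dimensional) boxes in $\R^d$, where the box $X_v$ assigned to $v$ is the product of its intervals $I_v^{(\ell)}$ in each coordinate. The task is to modify these intervals, coordinate by coordinate, so that each $X_v$ becomes degenerate in exactly $d-p$ coordinates (the coordinates in $J_v$) while preserving the intersection graph. The key local move: fix a coordinate $\ell$, and let $U_\ell:=\{v: \ell\in J_v\}$ be the vertices we want to collapse in coordinate $\ell$. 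Condition (ii) says that for each $v\in U_\ell$, the neighborhood $N(v)$ is a clique in $F_\ell$ — equivalently, in the interval representation of $F_\ell$, the intervals of $N(v)$ pairwise intersect, so by Helly's theorem on the line they have a common point. I would replace each $v\in U_\ell$'s interval $I_v^{(\ell)}$ by a single well-chosen point: specifically, a point in $\bigcap_{u\in N[v]} I_u^{(\ell)}$ (the Helly point of the closed neighborhood), if nonempty, or more carefully a point that lies in $I_u^{(\ell)}$ exactly for $u\in N[v]$ among a suitable working set. The care needed is that collapsing $v$ must not create or destroy adjacencies: the new point must lie in $I_u^{(\ell)}$ for every neighbor $u$ of $v$ and must avoid $I_w^{(\ell)}$ for non-neighbors $w$ — but non-adjacency $(v,w)\notin E(G)$ only guarantees $I_v^{(\ell')}$ and $I_w^{(\ell')}$ are disjoint for \emph{some} $\ell'$, not necessarily $\ell$, so we are free to shrink $I_v^{(\ell)}$ as long as we keep it meeting all of $v$'s neighbors — and the clique condition is exactly what makes a common meeting point available.

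\textbf{Main obstacle.} The delicate point in the ($\Leftarrow$) direction is handling vertices that must be collapsed in \emph{overlapping} sets of coordinates, and making sure the collapses in different coordinates do not interfere: after collapsing coordinate $\ell$, the interval data in coordinate $\ell$ has changed, and one must verify that the clique conditions needed for the \emph{other} collapses still hold (they do, since those conditions are about coordinates $\ell'\neq\ell$, which we never touched). I would process the coordinates one at a time, and for each coordinate $\ell$ process the vertices of $U_\ell$ one at a time, each time choosing the collapse point inside the current (already possibly shrunk) intervals of that vertex's neighbors; the clique hypothesis guarantees those neighbor-intervals still share a point, and since we only ever shrink intervals we never manufacture a spurious edge in coordinate $\ell$, while edges that should be present are preserved because the chosen point lies in all neighbor intervals and non-edges are witnessed in some other coordinate. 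Carefully bookkeeping this monotone shrinking process, together with the observation that an interval graph stays an interval graph when some intervals degenerate to points, completes the construction and yields the desired family of $p$-boxes.
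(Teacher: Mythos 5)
Your proposal is correct and follows essentially the same route as the paper: project the boxes onto the coordinate axes to get the interval graphs $F_\ell$ for the forward direction, and collapse each vertex's interval to a common (Helly) point of its neighborhood-clique in the designated $d-p$ coordinates for the converse. You actually supply more detail than the paper does on the delicate point — that iterated collapsing in a coordinate preserves the intersection graph because one only ever shrinks intervals and the clique condition keeps a common point available — so no gap remains.
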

\begin{proof}

Suppose that G is realizable as $p$-boxes in $\mathbb{R}^d$. 
Let $F_i$ be the intersection graph of the projection of the realization of $G$ on the $i$th coordinate axis, and observe  that $G = F_1\cap F_2\cap\cdots\cap F_d$. Let $V$ be a $p$-box in the realization of $G$ and let  $v\in V(G)$ be its representation in the graph. Since $V$ is a $p$-box there are $d-p$ coordinate axes where the projection of $V$ is a point; hence, in any of these $d-p$ axes the corresponding $F_j$ satisfies $[N_{F_j}(v)]\subset  E(F_j)$. Therefore $G$ has the $p$-slim property in $\mathbb{R}^d$.

Suppose now that $G$ has the $p$-slim property in $\mathbb{R}^d$. Then there exist interval graphs $F_1,...,F_k$  such that $G=\cap_{i=1}^d F_i$, and for any $v\in V(G)$ there exists a subset $J_v\subset I$ of $d-p$ indices  such that  $[N_{F_i}(v)]\subset  E(F_i)$, ${i\in J_v}$. Let $I_j$ be the realization of $F_j$ as intervals. Observe that for any ${i\in J_v}$ we can reduce $v$ in $I_i$ to a point since $[N_{F_i}(v)]\subset  E(F_i)$. We observe that the intersection of the projection of all $I_j$ as the $j$-th axis is a family of $p$--boxes in $\mathbb{R}^d$ with intersection graph $G$. \end{proof}

\section{Flat boxes and Piercing numbers}

For the remainder of this paper $C_s$ will denote the cycle of length $s$, with 
$V(C_s)=\{v_1,v_2,...,v_{s}\}$ and  $E(C_s)=\{(v_1,v_2), (v_2,v_3), ..., (v_{s-1},v_{s}),(v_{s},v_1)\}$. We will denote by $P_{\{v_1,v_2, \dots , v_l \} }$ the path of length $l-1$ with vertices $V(P_{\{v_1,v_2,\dots ,v_l \} })=\{v_{1},v_{2},\dots,v_{l} \}$ and edges $E(P_{\{v_1,v_2,\dots ,v_l \}} )=\{ (v_{1},v_{2}),\dots,(v_{l-1},v_{l})\}$. We denote the complement of $C_s$ by $C^c_s$. One graph  of particular interest to us is the path $P_{v_k}(C_s):= P_{\{v_{(k+2\mod s)},v_{(k+3\mod s)},\dots,v_{(k+s-2\mod s)}\}}$ as a subgraph of $C_s$ with respect to some vertex $v_k$ of $C_s$ (see Figure \ref{Pvks}). \\

We will say that a graph $G$ is {\it m--forbidden} in  $\mathbb{R}^d$ if $G^c$ is not realizable as  $m$-boxes in $\mathbb{R}^d$. 

\begin{figure}[h]
\begin{center}
\includegraphics[scale=.35]{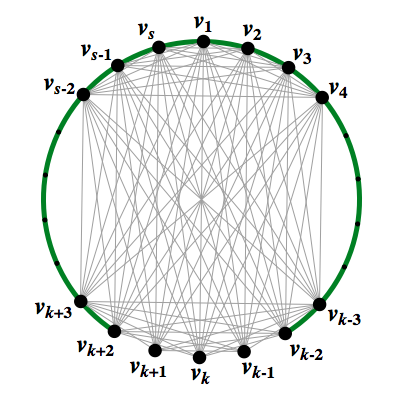}
\caption{The path $P_{v_k}(C_s)$ colored in green.}\label{Pvks}
\end{center}
\end{figure}
The following observation is well known.
\begin{obs}\label{conditions} 
Since interval graphs do not contain $C_4$ as an induced subgraph, then $2K_2$ (two disjoint edges) and a path of length 
greater or equal to $4$, are $1$--forbidden in $\mathbb{R}^1$.
\end{obs}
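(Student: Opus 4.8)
The plan is to unwind the definition of ``$1$--forbidden in $\mathbb{R}^1$'' and reduce both assertions to the single fact quoted in the statement, namely that interval graphs contain no induced $C_4$. First I would record that a $1$--box in $\mathbb{R}^1$ is exactly a closed interval, so a graph is realizable as $1$--boxes in $\mathbb{R}^1$ if and only if it is an interval graph. Hence, by the definition of $m$--forbidden, saying that a graph $G$ is $1$--forbidden in $\mathbb{R}^1$ is precisely saying that $G^c$ is not an interval graph. The two things to prove thus become: $(2K_2)^c$ is not an interval graph, and $P^c$ is not an interval graph for every path $P$ of length at least $4$.

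The observation that powers everything is that complementation commutes with passing to induced subgraphs: for any vertex subset $S$ one has $(G[S])^c = G^c[S]$. Combined with the facts that the class of interval graphs is hereditary and contains no induced $C_4$, this yields the criterion I will use: if $G$ contains an induced copy of $2K_2$, then $G^c$ contains an induced copy of $(2K_2)^c$; a direct check shows $(2K_2)^c = C_4$, so $G^c$ then has an induced $C_4$ and cannot be an interval graph, i.e. $G$ is $1$--forbidden in $\mathbb{R}^1$.

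With this criterion the two cases are immediate. For $2K_2$ itself, I would simply verify that its complement on the four vertices $\{a,b,c,d\}$ (with edges $ab$, $cd$) is the $4$--cycle $a\,c\,b\,d\,a$, an induced $C_4$, hence not an interval graph. For a path $P$ of length at least $4$, the point is that $P$ already contains $2K_2$ as an induced subgraph: among its first five vertices $v_1 v_2 v_3 v_4 v_5$ the edges $v_1 v_2$ and $v_4 v_5$ are disjoint and span no further edge of $P$, so $\{v_1,v_2,v_4,v_5\}$ induces $2K_2$. Applying the criterion produces an induced $C_4$ in $P^c$, so $P$ is $1$--forbidden.

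There is essentially no obstacle here beyond routine verification; the only point requiring care is the length convention, since the bound ``length at least $4$'' is sharp. Indeed $P_{\{v_1,v_2,v_3,v_4\}}$ (four vertices, length $3$) has complement isomorphic to itself, a path and hence an interval graph, so the threshold of four edges (equivalently five vertices) cannot be lowered. I would make sure ``length'' is read as the number of edges, in accordance with the paper's convention that $P_{\{v_1,\dots,v_l\}}$ has length $l-1$.
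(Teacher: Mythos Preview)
Your argument is correct and matches the paper's intended reasoning: the paper does not supply a proof at all, treating the observation as well known and embedding the key fact (no induced $C_4$ in interval graphs) in the statement itself, which is exactly what you unwind. Your additional remarks on hereditary complementation and on sharpness at length~$4$ are sound and simply flesh out what the paper leaves implicit.
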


We will prove some lemmas that will help us to prove Theorem \ref{flatboxes}. The following observation will be really helpful to prove such lemmas.

\begin{obs}\label{path}  Suppose that $G=C^c_s$ satisfies the $p$-slim box property in $\Rd$ for some $p\leq d$ and some $s$. By  Definition \ref{slimproperty} we know that 
$G=\cap_{i\in I} F_i $ where $F_i$ is an interval graph, $I=\{1,...,d\}$,  and for every $v_k\in V(G)$ there exists $J_{v_k}\subset I$, with $|J_{v_k}|=d-p$,  such that $[N_{v_k}]\subset F_j$ for all $j\in J_{v_k}$. Then $P_{v_k}(C_s)\subset \cap_{j\in J_{v_k}} F_j$. %where $V(P_{v_k})=\{v_{(k+2\mod s)},v_{(k+3\mod s)},\dots,v_{(k+s-2\mod s)}\}$. 
\end{obs}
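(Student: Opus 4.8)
The proof is a straightforward unwinding of the definitions; the single substantive point is that the vertex set of $P_{v_k}(C_s)$ is exactly the neighborhood of $v_k$ in $G=C^c_s$. First I would compute $N_G(v_k)$: in the complement of the cycle $C_s$ the only vertices not adjacent to $v_k$ are its two cycle-neighbors $v_{k-1}$ and $v_{k+1}$ (indices mod $s$), so $N_G(v_k)=V(C_s)\setminus\{v_{k-1},v_k,v_{k+1}\}$. By the definition of the graph $P_{v_k}(C_s)$ recalled above, this set is precisely $V(P_{v_k}(C_s))$; the only thing to check is the routine index bookkeeping mod $s$ identifying the two endpoints $v_{k+2}$ and $v_{k-2}$ of that path.

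Since every edge of $P_{v_k}(C_s)$ therefore joins two vertices of $N_G(v_k)$, each such edge lies in the closure $[N_{v_k}]$ --- equivalently, $P_{v_k}(C_s)$ is a subgraph of the complete graph spanned by $N_G(v_k)$. By hypothesis, which is condition ii) of Definition \ref{slimproperty} applied to $v_k$ together with the inclusion $N_G(v_k)\subset N_{F_j}(v_k)$ forced by $G=\cap_{i\in I}F_i$, we have $[N_{v_k}]\subset E(F_j)$ for every $j\in J_{v_k}$. Hence each edge of $P_{v_k}(C_s)$ lies in $E(F_j)$ for all $j\in J_{v_k}$, i.e.\ in $E\bigl(\cap_{j\in J_{v_k}}F_j\bigr)$; and since $V(P_{v_k}(C_s))\subset V(G)=V\bigl(\cap_{j\in J_{v_k}}F_j\bigr)$, this gives $P_{v_k}(C_s)\subset\cap_{j\in J_{v_k}}F_j$. (If $p=d$ then $J_{v_k}=\emptyset$ and the statement is vacuous.)

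No step here is a genuine obstacle --- the whole content is bookkeeping. The only places that warrant a moment's care are (a) the modular indexing identifying $V(P_{v_k}(C_s))$ with $N_G(v_k)$, and (b) keeping $[N_{v_k}]$ interpreted consistently, either as the set of all pairs within $N_G(v_k)$ or as the complete graph they span; both readings make the inclusions above literally correct.
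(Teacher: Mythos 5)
Your proof is correct and matches the reasoning the paper leaves implicit: the vertices of $P_{v_k}(C_s)$ are exactly $N_G(v_k)$ for $G=C^c_s$, so its edges lie in $[N_{v_k}]\subset [N_{F_j}(v_k)]\subset E(F_j)$ for every $j\in J_{v_k}$. The paper states this as an observation without a separate proof, and your unwinding of the definitions (including the remark that $N_G(v_k)\subset N_{F_j}(v_k)$ because $G=\cap_{i\in I}F_i$) is precisely the intended justification.
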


The following lemmas will allow us to prove the main theorem.

\begin{lemma}\label{C7}
 The cycle $C_{s}$ with $s\geq 7$ is 1-forbidden in $\mathbb{R}^d$ for any $d>1$.
\end{lemma}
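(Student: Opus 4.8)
The plan is to argue by contradiction, combining Lemma~\ref{lemon} with Observation~\ref{path}. Suppose $G=C_s^c$ with $s\ge 7$ were realizable as $1$-boxes in $\mathbb{R}^d$ for some $d>1$. By Lemma~\ref{lemon}, $G$ then has the $1$-slim box property in $\mathbb{R}^d$: there are interval graphs $F_1,\dots,F_d$ with $G=\bigcap_{i\in I}F_i$, $I=\{1,\dots,d\}$, and to each vertex $v_k$ is attached an index $\iota(k)\in I$ with $J_{v_k}=I\setminus\{\iota(k)\}$ such that, by Observation~\ref{path}, $P_{v_k}(C_s)\subseteq F_j$ for every $j\ne\iota(k)$. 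Write $e_m:=(v_m,v_{m+1})$ (indices read mod $s$) for the cycle edges of $C_s$; these are precisely the non-edges of $G$, so since $G=\bigcap_i F_i$ each $e_m$ must be absent from at least one $F_i$.

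The first step is to determine, for each $m$, exactly which $F_i$ can be missing $e_m$. The edge $e_m$ lies on the path $P_{v_k}(C_s)$ exactly when $k\notin\{m-1,m,m+1,m+2\}$; let $S_m$ be this set of admissible $k$, of size $s-4\ge 3$. For $k\in S_m$ we have $e_m\in F_j$ for all $j\ne\iota(k)$, so the only index at which $e_m$ can be absent is $\iota(k)$. Since $e_m$ is absent somewhere, this simultaneously forces $e_m\notin F_{\iota(k)}$ with $e_m\in F_i$ for all $i\ne\iota(k)$, and shows that $\iota(k)$ equals one and the same value, say $c_m$, for all $k\in S_m$.

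The second step is to show that $c_m$ does not depend on $m$, and then to read off the form of the offending interval graph. The key point is that consecutive admissible sets overlap: $S_m\cap S_{m+1}$ has $s-5\ge 2$ elements for $s\ge 7$, so choosing $k\in S_m\cap S_{m+1}$ gives $c_m=\iota(k)=c_{m+1}$; running this around the cycle makes all $c_m$ equal to a single constant $c$. Since, moreover, every vertex index lies in some $S_m$ (here $s\ge 5$ suffices), we conclude $\iota\equiv c$. Then $F_c\supseteq G$ while $e_m\notin F_c$ for every $m$; as the $e_m$ are exactly the non-edges of $G$, this forces $F_c=C_s^c$ on the nose.

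To close, I would contradict the fact that $F_c=C_s^c$ is an interval graph by exhibiting an induced $C_4$. For $s\ge 6$ the four vertices $v_0,v_3,v_1,v_4$ induce the $4$-cycle $v_0v_3v_1v_4$ in $C_s^c$, since among them only $v_0v_1$ and $v_3v_4$ are non-edges, and no interval graph contains an induced $C_4$; this is the desired contradiction. (For $d=1$ the statement is immediate, since a realization by $1$-boxes in $\mathbb{R}^1$ would make $C_s^c$ itself an interval graph.) The only genuinely delicate part is the bookkeeping in the two middle steps --- keeping track of which $e_m$ belongs to which $P_{v_k}(C_s)$ and checking the overlap $S_m\cap S_{m+1}\ne\emptyset$ that forces $\iota$ to be constant. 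That overlap is exactly what collapses when $s=5$ (where $C_5^c\cong C_5$ \emph{is} realizable as $1$-boxes in $\mathbb{R}^2$), so the hypothesis $s\ge 7$ --- in fact $s\ge 6$ --- enters essentially at this step, not merely in the final count.
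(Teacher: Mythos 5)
Your proof is correct, and while it uses exactly the same ingredients as the paper (Lemma~\ref{lemon}, Observation~\ref{path}, and the induced-$C_4$ obstruction for interval graphs behind Observation~\ref{conditions}), it is organized differently. The paper localizes at a single vertex: it fixes $J_{v_1}=\{1,\dots,d-1\}$, deduces that the $s-4$ edges of $P_{v_1}(C_s)$ are all missing from $F_d$, and is done immediately when $s>7$; the boundary case $s=7$ then needs a separate step (propagating to $v_6$ to force a fourth consecutive missing edge into $F_d$). You instead globalize: for each cycle edge $e_m$ you identify the unique index $c_m$ at which it can be absent, and the overlap $S_m\cap S_{m+1}\neq\emptyset$ propagates this index around the whole cycle, forcing a single $F_c$ to equal $C_s^c$ on the nose. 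This buys a uniform argument with no case split at $s=7$, a stronger intermediate conclusion (all the other $F_i$ are complete), and it makes transparent exactly where the hypothesis on $s$ enters --- your argument in fact covers $s\ge 6$, and the overlap is precisely what fails at $s=5$, matching the realizability of $C_5^c$ shown in Figure~\ref{figc5}. The price is the index bookkeeping for $S_m$, which you have done correctly.
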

\begin{proof}
We need to show that $G:=C^c_{s}$, $s\geq 7$,  is not realizable as $1$-boxes in $\Rd$. Suppose that $G$ is realizable as 1-boxes in 
$\mathbb{R}^d$. Then by Lemma \ref{lemon}, $G$ satisfies the $1$-slim box property. 
Thus $G=\cap_{i\in I}F_i$, $I=\{1,...,d\}$, where $F_i$ are interval graphs and for any vertex  $v\in V(G)$ there is a subset $J_v\subset I$ with $|J_v|=d-1$,  such that  $[N_{F_i}(v)]\subset  E(F_i)$, ${i\in J_v}$.

Without loss of generality assume that 
$J_{v_1}:=\{1,2\dots d-1\}$. 
 By Observation \ref{path}, $P_{v_1}(C_s)\subset \cap_{j=1}^{d-1}F_j$ and thus 
$(v_3,v_4),(v_4,v_5),..,(v_{s-2},v_{s-1}) \notin E(F_d)$ otherwise $G\not=\cap_{i\in I}F_i$ (see Figure \ref{projections}). 

If $s>7$, then there would be  at least four edges missing in $F_d$. This contradicts Observation \ref{conditions}. 
If $s=7$, by Observation \ref{path} on $v_{6}$ the path  
$P_{v_6}(C_s)$  should belong to $d-1$ of the $F_i$ interval graphs, but since $(v_3,v_4)\notin E(F_d)$, $J_{v_1}=J_{v_6}=\{1,...,d-1\}$.
Then $P_{v_6}(C_s)\in \cap_{i=1}^{d-1}F_i$. This implies $(v_2,v_3)\notin E(F_d)$, otherwise $(v_2,v_3)$ belongs to every $F_i$ contradicting $G=C_7^c$. Then the edges $(v_2,v_3),(v_3,v_4),(v_4,v_5),(v_5,v_6)$ are not in $E(F_d)$, contradicting Observation \ref{conditions}.  
\end{proof}

\begin{figure}[h]
\begin{center}
\includegraphics[scale=.3]{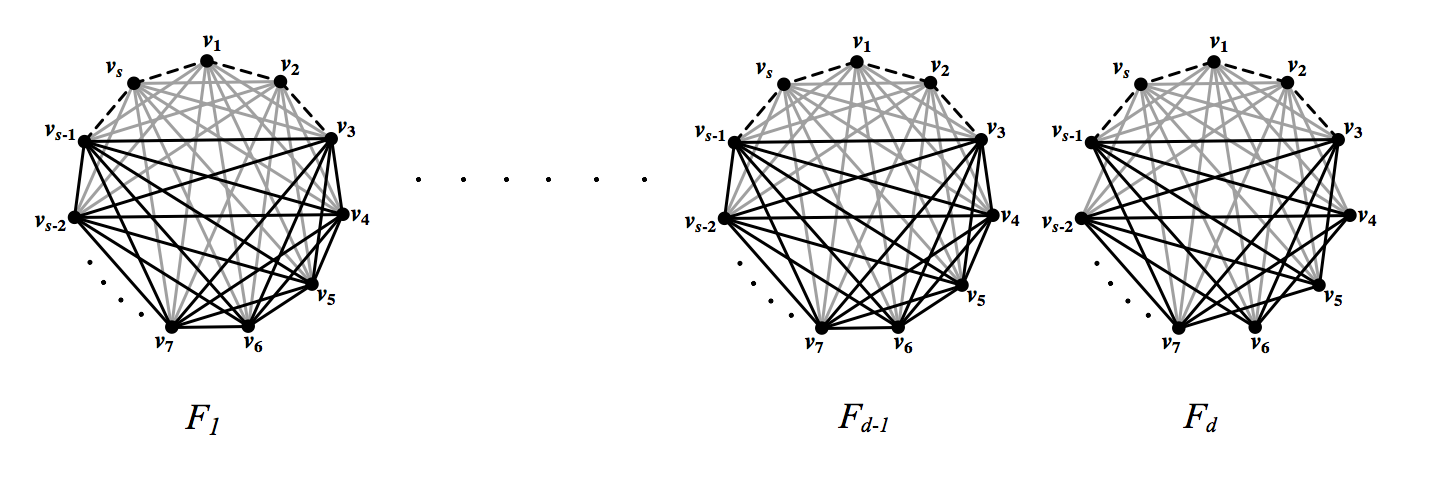}
\caption{The $d$ interval graphs whose intersection is $G$. Until this point of the proof we do not know anything about the path $P_{s-1,s,1,2,3}$ in each of the graphs, so a dashed edge means that such an edge may or may not be in the projection.
 Observe that the first $d-1$ graphs contain $[N(v_1)]$ and therefore $P_{v_1}(C_s)$, but the last graph does not contain  $P_{v_1}(C_s)$, otherwise the intersection would not be $G$.}\label{projections}
\end{center}
\end{figure}

\begin{lemma}\label{C9}
 The cycle $C_{s}$, $s\geq 9$, is 2-forbidden in $\mathbb{R}^d$ for any $d>2$.
 \end{lemma}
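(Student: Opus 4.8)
The plan is to mimic the structure of the proof of Lemma \ref{C7}, but now with a budget of $d-2$ ``complete-neighborhood'' indices per vertex instead of $d-1$, and with a sharper counting argument. Suppose for contradiction that $G:=C_s^c$ with $s\geq 9$ is realizable as $2$-boxes in $\mathbb{R}^d$ for some $d>2$. By Lemma \ref{lemon}, $G$ satisfies the $2$-slim box property, so $G=\cap_{i\in I}F_i$ with $I=\{1,\dots,d\}$, each $F_i$ an interval graph, and for every vertex $v_k$ there is $J_{v_k}\subset I$ with $|J_{v_k}|=d-2$ such that $[N(v_k)]\subset E(F_j)$ for all $j\in J_{v_k}$. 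By Observation \ref{path}, $P_{v_k}(C_s)\subset\bigcap_{j\in J_{v_k}}F_j$ for each $k$.

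First I would fix attention on $v_1$ and, without loss of generality, set $J_{v_1}=\{1,\dots,d-2\}$, so the ``bad'' indices for $v_1$ are the pair $\{d-1,d\}$. As in Lemma \ref{C7}, the path $P_{v_1}(C_s)=P_{\{v_3,\dots,v_{s-1}\}}$ sits inside $F_1,\dots,F_{d-2}$; since $G=C_s^c$, none of the edges $(v_3,v_4),(v_4,v_5),\dots,(v_{s-2},v_{s-1})$ — these are exactly the non-edges of $C_s^c$ that lie along that path — can be present in \emph{both} $F_{d-1}$ and $F_d$, for otherwise they would be in every $F_i$ and hence in $G$. So each of these $s-4\geq 5$ ``forbidden'' edges is missing from $F_{d-1}$ or missing from $F_d$ (or both). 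By pigeonhole one of $F_{d-1},F_d$, say $F_d$, is missing at least $\lceil (s-4)/2\rceil\geq 3$ of these edges. That in itself is not yet a contradiction to Observation \ref{conditions} (three missing edges can still occur in an interval graph), so I need to extract not just many missing edges but missing edges that are \emph{consecutive} along the path, i.e. a missing sub-path of length $\geq 4$, or else a $2K_2$.

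The main obstacle is precisely getting that ``consecutive'' structure. The idea is to iterate the Observation \ref{path} argument at a second, carefully chosen vertex, the way the $s=7$ case of Lemma \ref{C7} used $v_6$. Concretely: if $F_d$ is missing a run of four consecutive edges among $(v_3,v_4),\dots,(v_{s-2},v_{s-1})$ we are done by Observation \ref{conditions}; otherwise the missing edges in $F_d$ are ``spread out,'' which forces $F_{d-1}$ to carry long runs of these edges, and in particular some edge $(v_j,v_{j+1})$ with $3\le j$, $j+1\le s-1$ to be \emph{present} in $F_{d-1}$, hence absent from $F_d$ would have to be re-examined. More usefully, pick a vertex $v_k$ whose forbidden pair $J_{v_k}^c$ is forced to coincide with $\{d-1,d\}$ (this happens once we know some specific edge incident to the relevant block lies in all of $F_1,\dots,F_{d-2}$ but must avoid being global), so that $P_{v_k}(C_s)\subset\bigcap_{i=1}^{d-2}F_i$ as well; overlaying $P_{v_1}(C_s)$ and $P_{v_k}(C_s)$ covers all of $C_s$ except possibly one vertex, and the combined constraints force either $F_{d-1}$ or $F_d$ to omit four consecutive edges or a $2K_2$, contradicting Observation \ref{conditions}. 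The delicate point, and where I expect to spend the most care, is the bookkeeping that pins down $J_{v_k}=\{1,\dots,d-2\}$ for the second vertex $v_k$: one must argue that an appropriately placed edge already lies in $F_1,\dots,F_{d-2}$ (forcing $k$'s bad pair to be $\{d-1,d\}$), exactly analogous to the step ``$(v_3,v_4)\notin E(F_d)$ so $J_{v_6}=J_{v_1}$'' in Lemma \ref{C7}, and the reason $s\geq 9$ (rather than $s\geq 8$) is needed is that with $d-2$ slack indices one loses an extra edge of room, so the two overlaid paths need a cycle that is one longer to still force a length-$4$ missing sub-path.

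A clean alternative, which I would try to write up if the two-vertex overlay gets messy, is an induction/projection reduction: from a realization of $C_s^c$ as $2$-boxes in $\mathbb{R}^d$ with $d>3$, collapse one non-bad coordinate to reduce to $2$-boxes in $\mathbb{R}^{d-1}$ (using that every vertex still has $d-3\ge$ enough room — here one must check the slim property is inherited, which needs a uniformity argument on the $J_v$'s), bottoming out at $d=3$; then handle $d=3$ directly, where each vertex has exactly one bad index, making the pigeonhole/consecutiveness argument on the single graph $F_3$ (or $F_2$, $F_1$) completely explicit. Either way the engine is Observation \ref{conditions} applied to the ``leftover'' interval graph, and the combinatorial heart is converting ``many forbidden edges along a path'' into ``four consecutive forbidden edges or a $2K_2$,'' which is exactly why the threshold jumps from $s\geq 7$ in Lemma \ref{C7} to $s\geq 9$ here.
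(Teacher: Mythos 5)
Your overall strategy is the right one and is essentially the paper's: set up the $2$-slim decomposition $G=\cap_{i=1}^d F_i$ with $J_{v_1}=\{1,\dots,d-2\}$, note via Observation \ref{path} that every edge of $P_{v_1}(C_s)$ must be missing from $F_{d-1}$ or from $F_d$, and then locate a second vertex $v_k$ with $J_{v_k}=J_{v_1}$ so that $P_{v_k}(C_s)$ forces an additional edge outside $P_{v_1}(C_s)$ (namely $(v_1,v_2)$) to be missing from $F_{d-1}\cap F_d$, overloading one of these two interval graphs against Observation \ref{conditions}. But your writeup stops exactly at the step you yourself flag as ``the delicate point'': you never identify which vertex $v_k$ works, nor how both indices $d-1$ and $d$ get excluded from $J_{v_k}$. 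That identification is the combinatorial heart of the lemma, so as it stands this is a plan rather than a proof; the pigeonhole count (one of $F_{d-1},F_d$ misses at least three edges) is, as you concede, not usable on its own.

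For the record, here is how the paper closes that gap, so you can see what the missing bookkeeping looks like. One may assume $(v_{s-2},v_{s-1})\notin E(F_{d-1})$. Because $s\ge 9$, the edges $(v_3,v_4)$ and $(v_4,v_5)$ are vertex-disjoint from $(v_{s-2},v_{s-1})$, so by Observation \ref{conditions} they cannot also be missing from $F_{d-1}$; hence both are missing from $F_d$. The same disjointness argument forces $(v_{s-3},v_{s-2})\notin E(F_{d-1})$. Now split on where $(v_{s-4},v_{s-3})$ is missing: if from $F_{d-1}$, then $[N(v_{s-1})]$ contains a non-edge of $F_{d-1}$ (namely $(v_{s-4},v_{s-3})$) and a non-edge of $F_d$ (namely $(v_3,v_4)$), so $J_{v_{s-1}}=\{1,\dots,d-2\}$ and Observation \ref{path} at $v_{s-1}$ yields the extra missing edge $(v_1,v_2)$, which is disjoint from the missing edges already accumulated in each of $F_{d-1}$ and $F_d$ and hence contradicts Observation \ref{conditions}; the other case is symmetric. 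Note $s\ge 9$ is used concretely (twice) to guarantee disjointness of the relevant edge pairs --- this is sharper than your heuristic ``one extra edge of room.'' Finally, your alternative projection/induction route has a genuine obstruction beyond the one you mention: dropping a coordinate $i_0$ replaces $G=\cap_{i\in I}F_i$ by $\cap_{i\ne i_0}F_i$, which may strictly contain $G$ (projection creates new box intersections), so there is in general no coordinate that can be collapsed while preserving the intersection graph.
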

 \begin{proof}
Again we need to show that  $G:	=C^c_{s}$ %with $V(G^c)=\{v_1,v_2,...,v_{s}\}$ and  $E(G^c)=\{(v_1,v_2), (v_2,v_3), ..., (v_{s-1},v_{s}),(v_{s},v_1)\}$ 
is not realizable as $2$-boxes in $\Rd$.

Suppose that $G$ is realizable as 2-boxes in $\mathbb{R}^d$. Then by Lemma \ref{lemon}, $G$ satisfies the $2$-slim box property in $\mathbb{R}^d$.  
Thus $G=\cap_{i\in I}F_i$, $I=\{1,...,d\}$, where $F_i$ are interval graphs and for any vertex  $v\in V(G)$ there is a subset $J_v\subset I$, $|J_v|=d-2$,  such that  $[N_{F_i}(v)]\subset  E(F_i)$, ${i\in J_v}$. Again without loss of generality suppose   $J_{v_1}:=\{1,2,\dots, d-2\}$. By Observation \ref{path}, $P_{v_1}(C_s)\subset \cap_{j=1}^{d-2}F_j$, and thus $(v_3,v_4),(v_4,v_5),...,(v_{s-2},v_{s-1}) \notin E(F_d)\cap E(F_{d-1})$.

Without loss of generality suppose $(v_{s-2},v_{s-1})\notin E(F_{d-1})$.  Since $s\geq 9$, by Observation \ref{conditions} $(v_3,v_4),(v_4,v_5)\notin E(F_{d})$ and therefore $(v_{s-2},v_{s-3})\notin E(F_{d-1})$. 

Note that if $(v_{s-3},v_{s-4})\notin E(F_{d-1})$, $J_{v_{s-1}}=\{1,2,\dots,d-2\}$. This is because $(v_3,v_4)\notin E(F_{d})$ and $(v_{s-3},v_{s-4}),(v_{3},v_{4})\in [N_{v_{s-1}}]$; by the $p$--slim property  there exists $J_{v_{s-1}}\subset I$  where $[N_{v_{s-1}}]\subset F_j$ for all $j\in J_{v_{s-1}}.$

 Therefore by Observation \ref{path} any edge of the path $P_{v_{s-1}}(C_s)$ is not in  $E(F_d)\cap E(F_{d-1})$ (in particular this includes the edge $(v_1,v_2)
$). This is a contradiction, since by Observation \ref{conditions} the edge $(v_1,v_2)$  must be in  $E(F_d)\cap E(F_{d-1})$ otherwise we obtain either an empty path of size at least 4 or a disjoint path in any of $E(F_d), E(F_{d-1})$.
 We proceed with an analogous argument if $(v_{s-3},v_{s-4})\notin E(F_{d})$.  \end{proof}

Let $p,d\in\mathbb{N}$ with $2<p<d$. 
Suppose that $G:=C^c_{s}$ is realizable as $p$-boxes in $\mathbb{R}^d$ for some $s\geq 3p+1$. Then by Lemma \ref{lemon}, $G$ satisfies the $p$-slim box property in $\mathbb{R}^d$. 
Thus there exists $d$ interval graphs  $F_1,F_2,...,F_d$ such that $G=\cap_{i\in I}F_i$, $I=\{1,...,d\}$  and for any vertex  $v\in V(G),$ there is a subset $J_v\subset I$, $|J_v|=d-p$,  such that  $[N_{F_i}(v)]\subset  E(F_i)$, ${i\in J_v}$. Without loss of generality suppose that $J_{v_1}:=\{1,2\dots d-p\}$. Denote the rest of the vertices by $J=\{d-p+1,...,d\}$ and observe that $|J|=p$.  Since $d>p$, $J_{v_1}$ is not empty. 

 By Observation \ref{path} the path of length $s-4$  is contained in $F_i$ with $i\in J_{v_1}$, i.e.  
 $P_{v_1}(C_s)\subset \cap_{j=1}^{d-p}F_j$.  Since $G=\cap_{i\in I}F_i$, then for every edge $e$ of the path  $P_{v_1}(C_s)$ there exists at least one $i\in J$
such that $e\notin E(F_i)$, in this case we say that $e$ is a \emph{missing edge} of $F_i$. Similarly a \emph{missing path}, is a path of missing edges in $F_i$.   

We also say that $F_j$, with $j\in J$, satisfies the \emph{missing  property} if there is a missing edge of $F_j$, $e\in E(P_{v_1}(C_s))=E(P_{v_3,\dots ,v_{s-1}})$,  such that $e\in E(F_i)$ for all $i\in J\setminus\{j \}$. For example, in Figure \ref{parti} $F_3$ has the missing property since $e:=(v_9,v_{10})\notin E(F_3)$ but $e\in E(F_4)$ and $e\in E(F_5)$.

If there are  $r,t\in J$ such that either  the path $P_{v_3,v_4,v_5,v_6}$ is a missing path of $F_r$ and the path $P_{v_{s-3},v_{s-2},v_{s-1}}$ is a missing path of $F_t$ or the path $P_{v_3,v_4,v_5}$ is a missing path of $F_r$ and the path $P_{v_{s-4},v_{s-3},v_{s-2},v_{s-1}}$ is a missing path of $F_t$, we say that $J$ satisfies the \textit{extreme condition}.  
If there exists  $u$ such that  $6\leq u\leq s-4$ and  $r,t\in J$ such that $P_{v_{u-3},v_{u-2},...,v_{u+2},v_{u+3}}$ is missing in $F_t\cup F_r$, we say that $J$ satisfies the \textit{contiguous condition}. 
The following two technical lemmas imply that if $J$ satisfies the missing property for all $j \in J$, then neither the contiguous nor the extreme condition holds. 

\begin{lemma}\label{Claim 1}Suppose that for all  $j\in J$, $F_j$ satisfies the missing property (where $J$ and $F_j$ are defined as above). Then $J$ does not satisfy the contiguous condition.  
\end{lemma}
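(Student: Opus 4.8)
The plan is to argue by contradiction: assume the contiguous condition holds, say witnessed by an index $u$ with $6\le u\le s-4$ and by $r,t\in J$. The first ingredient I would record is the structural fact, already implicit in the proofs of Lemmas \ref{C7} and \ref{C9}: if $H$ is an interval graph with $C^c_s\subseteq H$ and $s\ge 3p+1$ (so $s\ge 10$), then the set $B_H$ of edges of $C_s$ missing from $H$ is a cyclically contiguous run of at most three edges. Indeed, four consecutive missing edges of $C_s$ (on five vertices), or two missing edges of $C_s$ at cyclic distance at least three, force an induced $C_4$ in $H$, because all the remaining pairs among those vertices are chords of $C_s$ and hence edges of $C^c_s\subseteq H$; the case of cyclic distance exactly two reduces to one of the previous two. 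This contradicts Observation \ref{conditions}. Applying this to the $F_i$ of the $p$-slim realization shows that each $M_i$ (the missing path-edges of $F_i$) is a contiguous run of length $\le 3$; and since, by hypothesis, every $F_j$ with $j\in J$ has the missing property, each such $M_j$ is nonempty and carries a \emph{private} edge $e_j\in M_j$ with $e_j\in E(F_{j'})$ for all $j'\in J\setminus\{j\}$. In particular $J_{v_1}$ coincides exactly with $\{i:\ M_i=\varnothing\}$, a set of size $d-p$.

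Next I would pin down $M_t$ and $M_r$. Each of the six edges $(v_{u-3},v_{u-2}),\dots,(v_{u+2},v_{u+3})$ is missing from $F_t$ or from $F_r$; since $M_t$ and $M_r$ are contiguous runs of length at most three, the only way two such runs can cover six consecutive edges is, up to interchanging $r$ and $t$, $M_t=\{(v_{u-3},v_{u-2}),(v_{u-2},v_{u-1}),(v_{u-1},v_u)\}$ and $M_r=\{(v_u,v_{u+1}),(v_{u+1},v_{u+2}),(v_{u+2},v_{u+3})\}$, and these are the complete missing sets of $F_t$ and $F_r$. So $|M_t|=|M_r|=3$. Combining this with Definition \ref{slimproperty} and Observation \ref{path}: for a vertex $v_k$, an index $i\in J_{v_k}$ forces $[N_{v_k}]\subseteq F_i$, hence $F_i$ contains every path-edge except possibly the (at most four) path-edges incident to $v_{k-1},v_k,v_{k+1}$, i.e.\ $M_i\subseteq\{k-2,k-1,k,k+1\}$. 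Thus an index $i$ with $|M_i|=3$ can lie in $J_{v_k}$ for at most two values of $k$; in particular $t\in J_{v_k}$ only for $k\in\{u-2,u-1\}$ and $r\in J_{v_k}$ only for $k\in\{u+1,u+2\}$. More generally, whenever $k$ is such that no $M_j$ ($j\in J$) is contained in $\{k-2,k-1,k,k+1\}$, we get $J_{v_k}\cap J=\varnothing$, whence $J_{v_k}=J_{v_1}$ by the cardinality $|J_{v_k}|=|J_{v_1}|=d-p$, and therefore $[N_{v_k}]\subseteq F_i$ for every $i\in J_{v_1}$.

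The contradiction is then obtained by running the last observation over all such ``good'' vertices $v_k$ at once. Each $M_j$ fails to fit inside $\{k-2,k-1,k,k+1\}$ for all but a bounded number of $k$ (two if $|M_j|=3$, three if $|M_j|=2$, four if $|M_j|=1$), and since the $M_j$ together cover the $s-4$ path-edges with $p$ runs of length $\le 3$, their total ``deficiency'' is small; hence there are enough good vertices, and the union $\bigcup_k[N_{v_k}]$ over them is all of $K_s$. This forces every $F_i$ with $i\in J_{v_1}$ to equal $K_s$, so $\bigcap_{i\in I}F_i=\bigcap_{j\in J}F_j=C^c_s$. But then each of the $s$ edges of $C_s$ is missing from some $F_j$ with $j\in J$, and each such $F_j$ misses at most three of them, so $s\le 3|J|=3p$, contradicting $s\ge 3p+1$.

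The main obstacle is the last step, namely verifying that the good vertices really cover all pairs of $V(C_s)$ through their neighbourhood closures. When $s$ is comfortably larger than $3p$ a crude count already forbids a $p$-slim realization of $C^c_s$, so the real work is in the narrow range $s\in\{3p+1,\dots,3p+4\}$, where the positions of the remaining $M_j$ (the $j\in J\setminus\{r,t\}$) must be controlled using both the contiguous condition and the private edges $e_j$ to exclude the few residual configurations; a cleaner route may be to bypass $\bigcup_k[N_{v_k}]=K_s$ and instead invoke Observation \ref{path} at a single shrewdly chosen vertex (in the spirit of the use of $v_{s-1}$ in the proof of Lemma \ref{C9}) to produce directly an induced $C_4$ in one of the interval graphs.
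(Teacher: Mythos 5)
Your overall strategy is the same as the paper's: show that no index of $J$ can belong to $J_{v_u}$, conclude $J_{v_u}=J_{v_1}$, force the $F_k$ with $k\in J_{v_1}$ to be complete, and then pigeonhole the $s$ cycle edges against the capacity $3|J|=3p<s$. Your intermediate steps (contiguous missing runs of length at most $3$, the identification of $M_r$ and $M_t$, the exclusion of the remaining $j\in J$ from $J_{v_u}$ via the private edges of the missing property) all match the paper's argument and are sound.

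However, there is a genuine gap exactly where you flag it: you do not close the final step, and you propose a covering argument over \emph{many} ``good'' vertices whose verification you defer to an unanalyzed case study over $s\in\{3p+1,\dots,3p+4\}$. That machinery is unnecessary, and the missing observation is short: the \emph{single} vertex $v_u$ already suffices. Since $6\le u\le s-4$, the triples $\{v_1,v_2,v_s\}$ and $\{v_{u-1},v_u,v_{u+1}\}$ are disjoint, and every edge of $C_s$ has both endpoints inside one of these two triples' closed neighbourhoods along the cycle; hence $E(P_{v_1}(C_s))\cup E(P_{v_u}(C_s))$ is all of $E(C_s)$, and therefore $E(C^c_s)\cup[N(v_1)]\cup[N(v_u)]=E(K_s)$. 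So once $J_{v_u}=J_{v_1}$, Observation \ref{path} applied at $v_1$ and at $v_u$ makes each $F_k$, $k\in J_{v_1}$, equal to $K_s$ outright, and every one of the $s$ cycle edges must then be missing from some $F_j$ with $j\in J$, giving $s\le 3p$ and the contradiction. You should replace your $\bigcup_k[N(v_k)]$ covering argument (and the accompanying unfinished case analysis) with this two-vertex computation; as written, your proof is incomplete.
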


 \begin{proof}
Suppose the opposite. 
Since the edges $(v_{u},v_{u-2})$, $(v_{u},v_{u-3})$ and  $(v_{u-2},v_{u-3})$ are missing in, say, $F_r$, then $r\notin J_{v_{u}}$. 
Analogously   $(v_{u},v_{u+2})$,$(v_{u},v_{u+3})\in E(G)$ and  $(v_{u+2},v_{u+3})$ is missing in $E(F_t)$, so $t\notin J_{v_{u}}$. 
For any other $j\in J\setminus\{r,t\}$,  the missing property implies there exists a missing edge $(x,y)$ of $F_j$ such that $e\in F_{r}$ and $e\in F_{t}$. This implies $x,y\notin \{v_{u-2},v_{u-1},v_{u},v_{u+2},v_{u+2}\}$. Hence $(v_{u},x),(v_{u},y)\in E(G)$ and thus $j\notin J_{v_{u}}$. Therefore $J_{v_{u}}=J_{v_1}=\{1,...,d-p\}$. From  $6\leq u\leq s-4$ and  Observation \ref{path}, we note that  $P_{v_{u}}$ is in $F_k$ for all $k\in J_{v_1}$. That is, $F_k$ is a complete graph for all $k\in J_{v_1}$. Thus for any edge $e$ of the  the cycle $C_s$, $e\notin\cap_{j\in J}F_j$.  

Given $s\geq 3p+1$ and  $|J|=p$, there is some  $F_j$ with $j\in J$  missing more than 3 edges, which contradicts Observation \ref{conditions}.  \end{proof}

\begin{lemma}\label{Claim 2}Suppose that for  all $j\in J$, $F_j$ satisfies the missing property.   Then $J$ does not satisfies the extreme condition.
\end{lemma}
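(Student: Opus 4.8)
The plan is to mirror the structure of Lemma \ref{Claim 1} as closely as possible, exploiting the symmetry between the ``contiguous'' case (a long missing path in the interior of $P_{v_1}(C_s)$) and the ``extreme'' case (missing paths hugging the two ends of $P_{v_1}(C_s)$, which wrap around through the vertex $v_1$ and its neighbours $v_2,v_s$). First I would assume, for contradiction, that $J$ satisfies the extreme condition, so there are $r,t\in J$ realizing one of the two listed configurations; by the symmetry of the cycle it suffices to treat one of them, say $P_{v_3,v_4,v_5,v_6}$ missing in $F_r$ and $P_{v_{s-3},v_{s-2},v_{s-1}}$ missing in $F_t$. (A brief remark handles the other configuration by the analogous argument.)

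The key step is to show that every index $j \in J$ actually fails to lie in $J_{v_1}$'s complement in a way that forces $J_{v_1} = \{1,\dots,d-p\}$ to already contain $[N(v_2)]$ (equivalently, to produce a vertex whose neighbourhood closure must lie in all the $F_j$ with $j \in J_{v_1}$, making those graphs complete). Concretely: since $(v_4,v_6),(v_4,v_5),(v_5,v_6)$ are edges of $G$ that are missing from $F_r$, we get $r \notin J_{v_5}$ (or whichever vertex has all three of those edges in its neighbourhood closure); similarly $t \notin J_{v_{s-2}}$. The trick is to pick the \emph{right} common vertex — here $v_2$ is adjacent in $C_s^c$ to all of $v_4,v_5,v_6$ and to all of $v_{s-3},v_{s-2},v_{s-1}$ (since none of these indices is $1$, $3$, or $s$, using $s \geq 3p+1 \geq 10$). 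So the missing edges witnessing $r$'s and $t$'s involvement in the extreme condition all lie inside $[N(v_2)]$. For any other $j \in J \setminus \{r,t\}$, the missing property hands us a missing edge $(x,y)$ of $F_j$ with $(x,y) \in E(F_r) \cap E(F_t)$, which (exactly as in Lemma \ref{Claim 1}) forces $x,y$ to avoid the relevant vertices and hence $(v_2,x),(v_2,y) \in E(G)$, so $j \notin J_{v_2}$. Therefore $J_{v_2}$ must equal $J_{v_1} = \{1,\dots,d-p\}$.

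Once $J_{v_2} = J_{v_1}$, I would invoke Observation \ref{path} applied to $v_2$: the path $P_{v_2}(C_s)$ (running from $v_4$ through $v_s$, essentially all of $C_s$ minus a short arc at $v_2$) is contained in $\bigcap_{k \in J_{v_1}} F_k$. Combined with $P_{v_1}(C_s) \subset \bigcap_{k\in J_{v_1}} F_k$ from the outset, the two paths together cover every edge of $C_s$ that could possibly be ``missing,'' so each $F_k$ with $k \in J_{v_1}$ is in fact the complete graph on $V(G)$. Hence $G = \bigcap_{j \in J} F_j$ with $|J| = p$, and since $s \geq 3p+1$ some $F_j$ ($j\in J$) must be missing at least $\lceil s/p \rceil \geq 4$ edges of the cycle, contradicting Observation \ref{conditions} (which forbids a missing path of length $\geq 4$ or two disjoint missing edges in an interval graph).

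The main obstacle I anticipate is purely bookkeeping: verifying that the chosen ``pivot'' vertex ($v_2$, or a small variant of it) is genuinely non-adjacent in $C_s$ — hence adjacent in $G = C_s^c$ — to \emph{all} the endpoints of the missing edges that witness the extreme condition, simultaneously for both the $P_{v_3,v_4,v_5,v_6}$/$P_{v_{s-3},v_{s-2},v_{s-1}}$ configuration and the $P_{v_3,v_4,v_5}$/$P_{v_{s-4},v_{s-3},v_{s-2},v_{s-1}}$ configuration. This needs $s$ large enough that the two extreme missing paths stay disjoint from each other and from $\{v_1,v_2,v_3\}\cup\{v_{s-1},v_s\}$ in the cyclic order, which is exactly where the hypothesis $s \geq 3p+1$ (with $p \geq 3$, so $s \geq 10$) does its work; I would make this index arithmetic explicit in a short opening paragraph and then the rest of the argument runs parallel to Lemma \ref{Claim 1}.
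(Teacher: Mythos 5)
Your argument is fine up to the point where you establish $J_{v_2}=J_{v_1}=\{1,\dots,d-p\}$ (and this pivot-at-$v_2$ step is a legitimate variant of what the paper does with $v_3$). The fatal gap is in the final step. The paths $P_{v_1}(C_s)=P_{\{v_3,\dots,v_{s-1}\}}$ and $P_{v_2}(C_s)=P_{\{v_4,\dots,v_{s}\}}$ do \emph{not} together cover every edge of $C_s$: their union consists of the edges $(v_3,v_4),\dots,(v_{s-1},v_s)$ and omits the three edges $(v_s,v_1)$, $(v_1,v_2)$, $(v_2,v_3)$. So the graphs $F_k$ with $k\in J_{v_1}$ are not forced to be complete, and only $s-3\geq 3p-2$ edges of the cycle are forced to be missing from $\bigcap_{j\in J}F_j$. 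Since each of the $p$ interval graphs $F_j$, $j\in J$, may carry a connected missing path of up to $3$ edges, the budget is $3p\geq 3p-2$ and the pigeonhole count $\lceil s/p\rceil\geq 4$ you invoke never gets off the ground for $s\in\{3p+1,\dots,3p+4\}$, which is exactly the range where Lemma \ref{Cs} needs this lemma. This is precisely the structural difference between the contiguous and extreme conditions: an interior pivot $v_u$ with $6\leq u\leq s-4$ makes $P_{v_1}(C_s)\cup P_{v_u}(C_s)$ cover all of $C_s$ (which is why the counting works in Lemma \ref{Claim 1}), whereas any pivot near the ends leaves a short uncovered arc.

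The paper closes this gap with a two-stage argument rather than a count. First, pivoting at $v_3$ forces $J_{v_3}=J_{v_1}$, whence the single new edge $(v_{s-1},v_s)$ must be missing from $\bigcap_{j\in J}F_j$; Observation \ref{conditions} together with the missing property shows only $F_t$ can absorb it, extending $F_t$'s missing path to $P_{\{v_{s-3},v_{s-2},v_{s-1},v_s\}}$. This enlarged missing path now excludes $t$ from $J_{v_s}$, and a second pivot at $v_s$ gives $J_{v_s}=J_{v_1}$, forcing $(v_2,v_3)$ to be missing from $\bigcap_{j\in J}F_j$ --- and one checks that no $F_j$ can absorb \emph{that} edge (for $F_r$ it would create a missing path of length $4$; for every other $F_j$ it would create either a $2K_2$ or a too-long missing path, by the missing property). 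You would need to add something like this second stage; the single pivot at $v_2$ does not suffice. (Minor point: your parenthetical claim that $r\notin J_{v_5}$ is also off, since $v_4,v_6\notin N_G(v_5)$ in $G=C_s^c$; the pivot vertex must be non-adjacent in $C_s$ to both endpoints of the witnessing missing edge.)
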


 \begin{proof}
Suppose that there are $r,t\in J$ such  that $P_{v_3,v_4,v_5,v_6}$ is a missing path of $F_r$ and  $P_{v_{s-3},v_{s-2},v_{s-1}}$ is a missing path of $F_t$.
 
 Since  $(v_3,v_5),(v_3,v_6)\in E(G)$ but $(v_6,v_5)$ is a missing edge of $F_r$, we have $r\notin J_{v_3}$. For any other
$j\in J$, by the missing property there exists a missing edge $(x, y)$ of $F_j$ such that
$e \in F_r$. In particular we observe that $(v_3, x),(v_3, y) \in E(G)$, and thus
$j \notin J_{v_3}$. Therefore $J_{v_3} = J_{v_1} =\{1,2,...,d-p\}$ and $(v_{s-1},v_s)$ is a missing edge of $\cap_{j\in J} F_j$. 
 
 By Observation \ref{conditions} and the missing property, the edge $(v_{s-1},v_s)$ is only a missing edge of $F_t$. Therefore  $P_{v_{s-3},v_{s-2},v_{s-1},v_s}$ is a missing path of $F_t$. Since  $(v_{s},v_{s-3}),(v_{s},v_{s-4})\in E(G)$, but $(v_{s-3},v_{s-4})$ is a missing edge of $F_t$, it follows $t\notin J_{v_s}$. 
 For any other $j\in J$, by the missing property there exists a missing edge $(x,y)$ of $F_j$ such that $e\in F_{t}$. In particular $(v_{s},x),(v_{s},y)\in E(G)$, so $j\notin J_{v_s}$. Therefore $J_{v_{s}}=J_{v_1}=\{1,...,d-p\}$. By Observation  \ref{path}  $P_{v_s}(C_s)$  is in $F_k$ for all $k\in J_{v_1}$, which implies that the edge $(v_2,v_3)$ is a missing edge of $\cap_{j\in J}F_j$. By Observation \ref{conditions} and the fact that for any $j\in J$, $F_j$ satisfies the missing property, there  cannot be some $F_j$, $j\in J$, with such a missing edge. This is a contradiction.\\
  Note that an analogous argument can be used if  the path $P_{v_3,v_4,v_5}$ is a missing path of $F_r$ and the path $P_{v_{s-4},v_{s-3},v_{s-2},v_{s-1}}$ is a missing path of $F_t$, for some $r,t\in J$. 
  \end{proof}

We apply Lemmas \ref{Claim 1} and \ref{Claim 2} to show the following useful lemma. 

\begin{lemma}\label{Cs}
 The cycle $C_{s}$  is $p$-forbidden in $\mathbb{R}^d$ for any $d>p>2$ when the following two conditions hold
%\alflist
\begin{enumerate}

 \item[i)] $s\geq 3p+1$ for $p$ even, 
 \item[ii)] $s\geq 3p+2$  for  $p$ odd.
 
 \end{enumerate} %$s\geq 3p+1$ for even $p$ and $s\geq 3p+2$  for odd $p$ is $p$-forbidden in $\mathbb{R}^d$ for any $d>p>2$.
 \end{lemma}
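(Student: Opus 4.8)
The plan is to argue exactly as in Lemmas~\ref{C7} and \ref{C9}, but now leaning on the two technical Lemmas~\ref{Claim 1} and \ref{Claim 2} to handle the $p$ "extra" coordinate graphs at once. Suppose for contradiction that $G := C^c_s$ is realizable as $p$-boxes in $\mathbb{R}^d$ with $d > p > 2$ and $s \geq 3p+1$ (resp.\ $s \geq 3p+2$ when $p$ is odd). By Lemma~\ref{lemon}, $G$ has the $p$-slim box property; write $G = \cap_{i\in I} F_i$ with $I = \{1,\dots,d\}$, fix $J_{v_1} = \{1,\dots,d-p\}$ and $J = \{d-p+1,\dots,d\}$ as in the setup preceding Lemma~\ref{Claim 1}. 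By Observation~\ref{path}, $P_{v_1}(C_s) = P_{v_3,\dots,v_{s-1}} \subset \cap_{j\in J_{v_1}} F_j$, so every edge of this path on $s-4 \geq 3p-3$ vertices is a missing edge of at least one $F_j$, $j \in J$. The target contradiction in every case is the same as in the earlier proofs: produce some $F_j$, $j\in J$, missing four or more edges along a path (or two disjoint edges), contradicting Observation~\ref{conditions}.

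First I would dispose of the easy case: if some $F_j$ with $j \in J$ does \emph{not} satisfy the missing property, then every missing edge of $F_j$ is also missing in some other $F_i$, $i \in J \setminus \{j\}$; the edges of $P_{v_1}(C_s)$ that are missing in $F_j$ must then be covered by the remaining $p-1$ graphs $F_i$, $i \in J\setminus\{j\}$. I would need a counting argument here: among the $s-5 \geq 3p-4$ edges of $P_{v_1}(C_s)$, each $F_i$ ($i\in J$) can miss at most three of them by Observation~\ref{conditions}, and these three must moreover form at most a sub-path of length $\leq 3$ with no second disjoint missing edge. A pigeonhole on $s-5$ edges among $p$ graphs forces some $F_i$ to miss $\geq \lceil (s-5)/p \rceil \geq 3$ edges, and with $s$ large enough (this is where $s\geq 3p+1$, or $3p+2$ for $p$ odd, is used to get strict excess or to break parity) some $F_i$ is forced to miss four edges or to miss a disjoint pair — contradiction. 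So we may assume \emph{every} $F_j$, $j\in J$, satisfies the missing property.

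Now Lemmas~\ref{Claim 1} and \ref{Claim 2} apply: $J$ satisfies neither the contiguous nor the extreme condition. I would then show that the failure of both conditions, combined with the missing property holding for all $j\in J$, is itself impossible once $s$ is large. Informally: the missing property says each $F_j$ has a "private" missing edge (missing in $F_j$ but present in all other $F_i$, $i\in J$); the $p$ private edges, together with the fact that each $F_j$ misses at most three edges total and those three are almost-contiguous (by Observation~\ref{conditions}, no disjoint pair), force the $p$ missing-paths to tile the long path $P_{v_3,\dots,v_{s-1}}$ in an essentially interval-like fashion. Ruling out the contiguous condition prevents two of these blocks from overlapping in a long run in the interior, and ruling out the extreme condition prevents the blocks at the two ends from being short. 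A careful accounting of how $p$ blocks, each of length at most $3$ and pairwise non-overlapping except as permitted, can cover $s-4$ vertices yields $s - 4 \leq 3p - (\text{something})$, i.e.\ $s \leq 3p$ (or $s \leq 3p+1$ when $p$ is odd), contradicting the hypothesis.

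\textbf{The main obstacle.} The delicate part is the final combinatorial accounting in the last paragraph: translating "missing property for all $j\in J$" plus "no contiguous condition" plus "no extreme condition" plus "Observation~\ref{conditions} for each $F_j$" into a clean numeric bound $s \leq 3p$ (resp.\ $3p+1$). One has to be careful about how the at-most-three missing edges of each $F_j$ can be arranged (a sub-path of $\leq 3$ edges, or $\leq 3$ edges forming a short path, but never two disjoint edges and never a path of length $4$), about the overlaps between different $F_j$'s along $P_{v_1}(C_s)$, and about the boundary vertices $v_3$ and $v_{s-1}$ where Observation~\ref{conditions} interacts with the wrap-around edges $(v_2,v_3)$ and $(v_{s-1},v_s)$ of $C_s$ itself. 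The parity split between $p$ even and $p$ odd should emerge precisely from whether $p$ blocks of size $\leq 3$ can be packed to cover $3p-3$ versus $3p-2$ vertices under the overlap constraints — hence the extra $+1$ in the odd case. I expect the argument to mirror, block by block, the explicit edge-chasing already done for $s=7$ in Lemma~\ref{C7} and for $s=9$ in Lemma~\ref{C9}, only now organized through the abstract "missing property / contiguous / extreme" vocabulary so that it runs uniformly in $p$.
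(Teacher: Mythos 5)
Your skeleton matches the paper's: reduce to the case where every $F_j$, $j\in J$, satisfies the missing property, then invoke Lemmas~\ref{Claim 1} and \ref{Claim 2}. But there are two genuine gaps. The first is your ``easy case'': you claim that if some $F_j$ fails the missing property, a pigeonhole on the edges of $P_{v_1}(C_s)$ already forces some $F_i$ to miss four edges or a disjoint pair. This is false. Take $s=3p+1$: the path $P_{v_1}(C_s)$ has exactly $s-4=3p-3$ edges, and these can be covered by $p-1$ of the graphs in $J$, each missing a contiguous block of exactly $3$ edges, with the remaining $F_j$ having no private missing edge on that path. No graph violates Observation~\ref{conditions}, so no contradiction arises from counting. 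The paper isolates exactly this configuration (the partition type $|J_3|=p-1$, $|J_2|=|J_1|=0$) as the one case where the missing property genuinely fails, and it needs a separate structural argument there: using the wrap-around edges $(v_2,v_3)$ and $(v_{s-1},v_s)$ it shows that the vertex $v_6$ cannot admit any valid index set $J_{v_6}$ of size $d-p$, because $p+1$ of the coordinate graphs fail to contain $[N(v_6)]$. (Your arithmetic is also off: the path has $s-4$ edges, not $s-5$, and $\lceil(s-5)/p\rceil\geq 3$ fails for $p=3,4$.)

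The second gap is that the ``careful accounting'' in your last step is where essentially all of the work lives, and it does not reduce to a single inequality $s\leq 3p$ (resp.\ $3p+1$). The paper first bounds $s<3p+5$, then builds a partition of $P_{v_1}(C_s)$ into missing subpaths of lengths $1,2,3$ with distinct representatives, solves the system $s-4=|J_1|+2|J_2|+3|J_3|$, $p\geq|J_1|+|J_2|+|J_3|$ for each $s\in\{3p+1,\dots,3p+4\}$, and only then applies pigeonhole to force the contiguous or extreme condition. Even so, several sub-cases escape the pigeonhole and require explicit edge-chasing of the kind you hoped to avoid: the partition type $(|J_3|,|J_2|,|J_1|)=(p-2,1,1)$ at $s=3p+1$, the case $p=3$, $s=11$ (where one must successively pin down $J_{v_{11}}$, $J_{v_2}$, $J_{v_5}$ and conclude $F_1,\dots,F_{d-3}$ are complete), and the case $p=4$, $s=13$. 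Your proposal correctly predicts that the parity split comes from the packing arithmetic, but without the partition/system machinery and these sub-cases the proof is not complete.
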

 \begin{proof}

We will show that $G:=C^c_{s}$  for the corresponding values of $s$ %where $V(C_s)=\{v_1,v_2,...,v_{s}\}$ and  $E(C_s)=\{(v_1,v_2), (v_2,v_3), ..., (v_{s-1},v_{s}),(v_{s},v_1)\}$  and $s\geq 3p+1$ 
is not realizable as $p$-boxes in $\mathbb{R}^d$.
Suppose that $G$ is realizable as $p$-boxes in $\mathbb{R}^d$. By Lemma \ref{lemon}, $G$ has the $p$--slim property. Then there exists $d$ interval graphs  $F_1,F_2,...,F_d$ such that $G=\cap_{i\in I}F_i$, $I=\{1,...,d\}$  and, as before, we assume  that for $v_1$, $J_{v_1}:=\{1,2\dots d-p\}$ and  $J=\{d-p+1,..,d\}$ are the rest of indices (observe that $|J|=p$).  Since $d>p$, $J_{v_1}$ is not empty. So $P_{v_1}(C_s)$ has length $s-4$, and satisfies $P_{v_1}(C_s)\subset \cap_{i\in J_{v_1}}F_i$ and each edge of $P_{v_1}(C_s)$ is missing in $ \cap_{j\in J}F_j$. 
 
Observe first that  if $s-4\geq  3p+1$, at least one of the $F_j$, $j\in J$, will contain a path with $4$ or more missing edges yielding a contradiction of Observation \ref{conditions}. Thus we may assume 
  $$3p+1\leq s< 3p+5 {\text{ for any }} d>p>2.$$
  
Furthermore, by Observation \ref{conditions} each $F_i$, $i\in J$, can only have a missing  subpath of  $P_{v_1}$ of length 3, 2 or  1. 
  
Since  $s< 3p+5$ and for any edge $e\in E(P_{v_1})$ there exists  $j\in J$ such that $e$ is missing in $F_j$, it is possible to construct a partition (not necessarily unique) of $P_{v_1}(C_s)$ into disjoint connected subpaths of length 1, 2 and 3, with the following two properties: \\

\begin{enumerate}
\item[1.] If $Q$ is an element of such a partition then $Q$ is a missing path of $F_j$ for some $j\in J$. In this case we may say that  $F_j$ represents $Q$.

\item[2.] If $Q$ and $P$ are different elements of the partition then $i\not=j$ for its corresponding representative  $F_i$ and $F_j$.\\
\end{enumerate}
For example, in Figure \ref{parti} we see a particular case when $s=11,$ $p=2$ and $d=5$. Here $J_{v_1}=\{1,2\}$ and $J=\{3,4,5\}$. The dashed edges may or may not be on the graph. Observe that  $P_{v_1}(C_{11})=P_{v_3,...,v_{10}}$ is in $F_1,F_2$. Consider the following partition $K$ of $P_{v_1}(C_{11})$: $Q_3:=P_{v_7,v_8,v_9,v_{10}}$ represented by $F_3$, $Q_4:=P_{v_6,v_7}$ represented by $F_4$ and $Q_5:=P_{v_3,v_4,v_5,v_{6}}$ represented by $F_5$. Clearly we may have also chosen the partition $K'$ as $Q_3:=P_{v_8,v_9,v_{10}}$ represented by $F_3$, $Q_4:=P_{v_6,v_7,v_8}$ represented by $F_4$ and $Q_5:=P_{v_3,v_4,v_5,v_{6}}$ represented by $F_5$.

For a given partition, let $J_i\subset J$ for $i=1,2,3,$ be the set of indices $j\in J_i$ such that $F_j$ represents an element of the partition of size $i$. 
 We observe that for any partition, $\cap_{i=1}^3 J_{i}=\emptyset$ and $\cup_{i=1}^3 J_i\subseteq J$. Furthermore,
\begin{equation}\label{system}
\begin{array}{lcl} s-4 & = & |J_1|+2|J_2|+3|J_3| \\ p&\geq &|J_1|+|J_2|+|J_3| \end{array}
\end{equation}

\begin{figure}
\includegraphics[scale=.24]{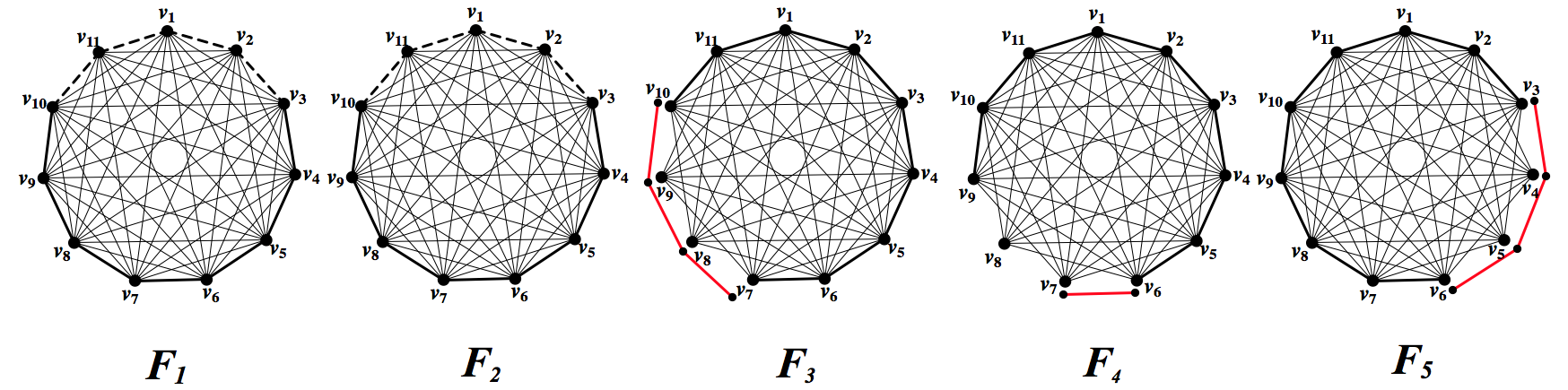}
\caption{In this figure we see a particular example of the partition $K$ when $s=11,p=2,d=5$ and, $J=\{3,4,5\}$. The red paths represent the partition $K$ formed by $Q_3$,$Q_4$ and $Q_5$. }\label{parti}
\end{figure}
In the example in Figure \ref{parti}, for the partition $K$, we obtain $J_1=\{4\}$, $J_2=\emptyset$ and $J_3=\{3,5\}$. For the partition $K'$ we obtain  $J_1=\emptyset$, $J_2=\{3,4\}$ and $J_3=\{5\}$.\\
\smallskip

In general, observe that if for all possible partitions  we have 
\begin{equation}\label{equation} |J_1|+|J_2|+|J_3| =p,
\end{equation} 
then the missing property holds for all $j\in J$. To establish this, assume that there is $j\in J$ such that for any edge $e$ missing in $F_j$, there exists $i_e\in J$ such that $e$ is missing in $F_{i_e}$. We can construct a partition with  $j\notin J_1\cup J_2\cup J_3$, and  thus this partition of $P_{v_1}(C_s)$  satisfies $|J_1|+|J_2|+|J_3| =p-1$ and $s-4  = |J_1|+2|J_2|+3|J_3| $. This  is a contradiction. 

Next we will show that in most of the cases for  $p$ and $s$, Equation (\ref{equation}) holds and therefore the missing property holds for all $j\in J$. Moreover, with appropriate values of $s$, this yields a contradiction of Lemmas \ref{Claim 1} and \ref{Claim 2}.\\

Observe that if $s=3p+4$,  the only solution for System (\ref{system}) is $|J_3|=p,|J_2|=0$ and $|J_1|=0$. This implies the missing property holds for all $j\in J$. Note also that since all elements  in the partition  have length  $3$, and since each element of the partition is uniquely represented by some $F_j$ with $j\in J$, $J$ satisfies the contiguous condition and thus contradicts Lemma \ref{Claim 1}. Therefore we may assume that $s<3p+4$. \\ 

For $s=3p+3$, there are at least $s-4=3p-1$ missing edges in $\cap_{j\in J}F_j$ (one per edge in $P_{v_1}$).  By solving System (\ref{system}) we find that $|J_3|=p-1$,$|J_2|=1$ and $|J_1|=0$, which implies that the missing property holds.

If $p>3$, then $p-1>2$. By the pigeon hole principle and the fact that each element of the partition is uniquely represented by some $F_j$ with $j\in J$, $J$ satisfies the contiguous condition and contradicts Lemma \ref{Claim 1}. If $p=3$, $|J|=3$, $|J_3|=2$ and $|J_2|=1$. Thus $J$ satisfies either the contiguous condition or the extreme condition, contradicting either Lemma \ref{Claim 1} or Lemma \ref{Claim 2} (again, this is also using the fact that each element of the partition is uniquely represented by an $F_j$ with $j\in J$).  Therefore we may assume $s\leq 3p+2$.\\

For $s=3p+2$, there are at least $s-4=3p-2$ missing edges in $\cap_{j\in J}F_j$. Solving System (\ref{system}) we find that either $|J_3|=p-1$, $|J_2|=0$ and $|J_1|=1$ or $|J_3|=p-2$, $|J_2|=2$ and $|J_1|=0$, which implies that the missing property holds for all $j\in J$.
If $p>5$, then $p-2>3$. As before, by the pigeon hole principle $J$ satisfies the contiguous condition  and contradicts Lemma \ref{Claim 1}. Note that if $p=5$, either $|J_3|=4$ and $|J_1|=1$, or $|J_3|=3$ and $|J_2|=2$. In either case, the pigeon hole principle implies $J$ satisfies either the contiguous condition or the extreme condition and thereby contradicts either Lemma \ref{Claim 1} or Lemma \ref{Claim 2}.\\

If $p=4$,  solving System \eqref{system} gives $|J_3|=3$, $|J_2|=0$, and $|J_1|=1$ or $|J_3|=2$, $|J_2|=2$, and $|J_1|=0$. In the first case, by the pigeon hole principle $J$ satisfies the contiguous condition and contradicts Lemma \ref{Claim 1}. In the second case, again by the pigeon hole principle  $J$ satisfies the extreme condition and contradicts Lemma \ref{Claim 2}.\\

If $p=3$, solving System \eqref{system} gives either $|J_3|=2$, $|J_2|=0$, and $|J_1|=1$ or $|J_3|=1$, $|J_2|=2$, and $|J_1|=0$. In the first case,  by the pigeon hole principle $J$ satisfies the contiguous condition and contradicts Lemma \ref{Claim 1}.  For the second case we observe that the only way $J$ does not satisfy the extreme condition (which would contradict Lemma \ref{Claim 2}) is if there is an $F_j$   missing no more than  $P_{v_3,v_4,v_5}$ from $P_{v_1}(C_{11})$, $F_i$ missing no more than $P_{v_5,v_6,v_7,v_8}$ from $P_{v_1}(C_{12})$, and $F_k$ missing no more than $P_{v_8,v_9,v_{10}}$ from $P_{v_1}(C_{12})$ for $i,j,k\in J$. Without loss of generality suppose that $i=d-2$, $j=d-1$ and $k=d$.  Since we know that the path $P_{v_{11}}(C_{11})$ must exist in at least $d-3$ projections, we have  $J_{v_{11}}=\{1,2,...,d-3\}$. Therefore $(v_{2},v_{3})\in E(F_i)$ for all $i\in\{1,2,...,d-3\}$. This implies  $(v_2,v_3)$ is a missing edge of $F_{d-2}\cap F_{d-1}\cap F_d$. By Observation \ref{conditions} $(v_2,v_3)$ can be only missing in $F_{d-2}$. Analogously we observe that $J_{v_2}=\{1,2,...,d-3\}$ and thus $(v_{10},v_{11})$ is a missing edge of $F_{d}$. This implies $J_{v_5}=\{1,2,...,d-3\}$, since the path $P_{v_{5}}(C_{11})$ is in $d-3$ of the $F_i$'s. Therefore $F_1,...,F_{d-3}$ are complete graphs $K_{11}$. But then $(v_{1},v_{2})$ is a missing edge of $F_{d-2}\cap F_{d-1}\cap F_d$, which is a contradiction  of Observation \ref{conditions} since there cannot be more than three edges missing in any of $F_{d-2}, F_{d-1}$ and $F_{d}$. \\

 We have shown that $s\geq 3p+2$  is $p$-forbidden for odd $p$. Thus we may assume that $s\leq 3p+1$.

 Let $s=3p+1$. Then there are $3p-3$ edges of $P_{v_1}$ missing in $\cap_{j\in J}F_j$. Solving System (\ref{system}) gives 
$$\text{ $|J_3|=p-1$, $|J_2|=0$ and, $|J_1|=0$,}$$
  $$\text{ $|J_3|=p-2$, $|J_2|=1$  and, $|J_1|=1$},\text{ or }$$
  $$\text{$|J_3|=p-3$, $|J_2|=3$ and, $|J_1|=0$.}$$

Observe that the  case where $|J_3|=p-1$, $|J_2|=0$, and $|J_1|=0$ is the only case where the missing property does not hold for all $j\in J$.  Without loss of generality suppose $J_3=\{d-p+1,...,d-1\}$. Since $G=\cap_{i=1}^d F_i$, there exists $k\in\{1,...,d\}$ such that $(v_{s-1},v_s)$ is a missing edge of $F_k$.  By Observation \ref{conditions}, $k\notin \{d-p+1,...,d-1\} $. Similarly there exists $r\in\{1,...,d\}$ such that the edge $(v_2,v_3)$ is missing in $F_r$.  By Observation \ref{conditions}, $r\notin \{k,d-p+1,...,d-1\} $. Since $|J_3|=p-1$, $|J_2|=0$, and $|J_1|=0$, we have $(v_3,v_4),(v_4,v_5)$ and $(v_5,v_6)$ are missing in some $F_t$ with $t\in J_3$. Similarly $(v_6,v_7),(v_7,v_8)$ and $(v_8,v_9)$ are missing in some $F_q$ for some $q\in J_3$. Thus there is no $J_{v_6}\subset \{1,...,d\}$ such that $|J_{v_6}|=d-p$ and such that the neighborhood of $v_6$ is a complete subgraph, given that we know $|\{r,k,d-p+1,...,d-1\}|=p+1$ and in any of $F_i$, $i\in \{r,k,d-p+1,...,d-1\}$, the neighborhood of $v_6$ is not a complete subgraph.  This yields a contradiction.

 Now suppose that $|J_3|=p-2$, $|J_2|=1$  and $|J_1|=1$. Note that the missing property holds for every $j\in J$. Let  $p>3$.
   Then Lemma \ref{Claim 1} or Lemma \ref{Claim 2} are contradicted  since $J$ would satisfy either the contiguous or the extreme condition unless there are $r,t\in J$ where $F_r$ is missing in at most the edge $(v_{s-1},v_{s-2})$ from $P_{v_1}(C_{s})$ and $F_t$ is  missing in at most  $P_{v_3,v_4,v_5,v_6}$  from $P_{v_1}(C_{s})$ (or similarly,  $F_r$ is  at most missing  $P_{v_{s-1},v_{s-2},v_{s-3},v_{s-4}}$ from $P_{v_1}(C_{s})$ and $F_t$ with missing only  $(v_3,v_4)$ from $P_{v_1}(C_{s})$). Without loss of generality suppose that  $F_r$ is missing at most 
 $(v_{s-1},v_{s-2})$ from $P_{v_1}(C_{s})$ and $F_t$ is  missing at most $P_{v_3,v_4,v_5,v_6}$  from $P_{v_1}(C_{s})$. 
Since $p>3$, $p-2>1$. This implies that there is a $g\in J$ where $F_g$ is missing $P_{v_6,v_7,v_8,v_9}$ or $P_{v_{s-2},v_{s-3},v_{s-4},s_{s-5}}$. If $F_g$ is missing  $P_{v_6,v_7,v_8,v_9}$, then $J_{v_6}=\{1,2,...,d-p\}$ and therefore  $(v_1,v_2)$ is missing in $\cap _{j\in\{d-p+1,...,d\}}F_j$ which  contradicts Observation \ref{conditions}. If $F_g$ is missing  $P_{v_{s-2},v_{s-3},v_{s-4},s_{s-5}}$, then $J_{v_{s-2}}=\{1,2,...,d-p\}$ and therefore  $(v_1,v_2)$ is missing in $\cap _{j\in\{d-p+1,...,d\}}F_j$ which is a contradiction of Observation \ref{conditions}. %Recall that we are only interested in $p$ even. 

Now suppose that $|J_3|=p-3$, $|J_2|=3$, and $|J_1|=0$. Again the missing property holds.  For $p\geq 6$,  by the pigeon hole principle either  Lemma \ref{Claim 1} or Lemma \ref{Claim 2} is contradicted (given that $J$ would satisfy either the contiguous or the extreme condition).  For $p=4$ ($s=3p+1=13$) we observe that $|J_3|=1$, $|J_2|=3$, and $|J_1|=0$. If there exists $j\in J$ where $F_j$ is missing $P_{v_{12},v_{11},v_{10},v_{9}}$ or $P_{v_3,v_4,v_5,v_6}$,  Lemma \ref{Claim 2} is contradicted (given that $J$ would satisfy the extreme condition). 
  
   Therefore there are $r,t\in J$ with $F_r$  at most missing  $P_{v_3,v_4,v_5}$ from $P_{v_1}(C_{13})$ and $F_t$ with  at most missing $P_{v_{12},v_{11},v_{10}}$  from $P_{v_1}(C_{13})$. Therefore $J_{v_{12}}=\{1,2,...,d-4\}=J_{v_1}$. This implies that $(v_{13},v_{12})$ and  $(v_{2},v_{3})$ are in $E(F_k)$ for all $k\in\{1,2,...,d-4\}$ and thus they are missing in $\cap _{j\in\{d-3,d-2,d-1,d\}}F_j$. By Observation \ref{conditions}, $(v_{13},v_{12})$ is missing in $F_t$ and   $(v_{2},v_{3})$ is missing in $F_r$. Observe then that either there is an $F_l$ missing the 3-path $P_{v_{10},v_{9},v_{8},v_{7}}$ or $P_{v_{5},v_{6},v_{7},v_{8}}$. Without loss of generality suppose that $F_l$ is missing $P_{v_{10},v_{9},v_{8},v_{7}}$. We observe that $J_{v_{10}}=\{1,2,...,d-4\}$. Therefore  $(v_1,v_2)$ is missing in  $\cap _{j\in\{d-3,d-2,d-1,d\}}F_j$ which contradicts Observation \ref{conditions}. Therefore the cycle $C_{s}$ with  $s\geq 3p+1$ is $p$-forbidden for even $p$.\end{proof}
 
\section{Piercing two for families of flat boxes}

In this section we prove Theorem \ref{flatboxes}. To do so, we apply the following proposition that seems to be widely known but for which we did not find a precise reference.  In any case it is easy to show and the proof is omitted. 

\begin{prop}\label{2hp}
The piercing number of a family of boxes $\mathcal{F}$  is $n$  if and only if $\chi(G_{\mathcal{F}}^c)= n$, where $\chi(G_{\mathcal{F}}^c)$ denotes the chromatic number of the complement of the intersection  
graph of $\mathcal{F}$.
\end{prop}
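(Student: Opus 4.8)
The plan is to prove both directions of the equivalence by translating between a piercing set for $\mathcal{F}$ and a proper coloring of $G_{\mathcal{F}}^c$. The key observation is that a family of boxes with a common point is exactly a clique in the intersection graph $G_{\mathcal{F}}$: since boxes are axis-parallel products of intervals, Helly's theorem in dimension one applied coordinate-wise shows that a subfamily of boxes has nonempty total intersection if and only if every pair of its members intersects (this is the classical Helly number $2$ for boxes, i.e. $h(d,1)=2$ mentioned in the introduction). Consequently, a subfamily of $\mathcal{F}$ has a common piercing point if and only if the corresponding vertex set induces a clique in $G_{\mathcal{F}}$, equivalently an independent set in $G_{\mathcal{F}}^c$.

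First I would prove the ``only if'' direction. Suppose $\mathcal{F}$ can be pierced by $n$ points $q_1,\dots,q_n$. For each box $X_i\in\mathcal{F}$ choose a color $c(i)\in\{1,\dots,n\}$ with $q_{c(i)}\in X_i$. Then each color class is a subfamily of boxes sharing a common point, hence (by the observation above) a clique in $G_{\mathcal{F}}$, hence an independent set in $G_{\mathcal{F}}^c$; so $c$ is a proper coloring of $G_{\mathcal{F}}^c$ with $n$ colors, giving $\chi(G_{\mathcal{F}}^c)\leq n$.

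Next the ``if'' direction. Suppose $\chi(G_{\mathcal{F}}^c)=n$, and fix a proper $n$-coloring of $G_{\mathcal{F}}^c$ into classes $V_1,\dots,V_n$. Each $V_k$ is an independent set in $G_{\mathcal{F}}^c$, hence a clique in $G_{\mathcal{F}}$, so every two boxes indexed in $V_k$ intersect; by the coordinate-wise Helly argument the boxes of $V_k$ have a common point $q_k$. Then $\{q_1,\dots,q_n\}$ pierces $\mathcal{F}$, so the piercing number of $\mathcal{F}$ is at most $n$. Combining the two inequalities, the piercing number equals $n$ exactly when $\chi(G_{\mathcal{F}}^c)=n$; to be careful one should note that "piercing number $\le n \iff \chi(G_{\mathcal{F}}^c)\le n$" for every $n$, and then take the least such $n$ on each side.

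The only real content is the clique-to-common-point step, i.e. verifying that boxes enjoy Helly number $2$; everything else is a bookkeeping translation between colorings and piercing sets. Since the paper already invokes $h(d,1)=2$ and treats the proposition as folklore, I would state the Helly-number-$2$ fact for boxes in one line (Helly in $\mathbb{R}^1$ applied to each coordinate projection) and present the two directions as above, which is why the authors felt comfortable omitting the details.
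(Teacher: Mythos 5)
Your argument is correct and is precisely the folklore argument the paper alludes to when it says the proof "is easy to show and ... omitted": piercing sets correspond to clique covers of $G_{\mathcal{F}}$ (equivalently, proper colorings of $G_{\mathcal{F}}^c$) via the Helly-number-$2$ property of axis-parallel boxes, which holds equally for the flat ($p$-dimensional) boxes used later since these are still coordinatewise products of (possibly degenerate) intervals. Since the paper supplies no proof of its own, there is nothing to contrast; your write-up simply fills the omission in the expected way.
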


\begin{obs}\label{2cricro}
Let $G$ be a graph such that $\chi(G)> 2$. If for any  $v\in V(G)$ we have $\chi(G\setminus\{v\})=2$, then $\chi(G)=3$ and $G$ is an odd cycle. This is easy to see: if  $\chi(G\setminus\{v\})=2$, then $G\setminus\{v\}$ is bipartite  and thus by coloring $v$ with a third color, we obtain  $\chi(G)=3$. It is well known that  odd cycles are the only $3$-critical chromatic graphs, i.e. they are the only family of graphs with chromatic number $3$ such that when any vertex is removed the chromatic number decreases.
\end{obs}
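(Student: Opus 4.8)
The plan is to show first that $\chi(G)=3$, then to note that the hypotheses say precisely that $G$ is a vertex-critical $3$-chromatic graph, and finally to deduce directly that such a graph must be an odd cycle. For the first step, pick any $v\in V(G)$; by hypothesis $G\setminus\{v\}$ is $2$-colourable, i.e. bipartite, and extending any proper $2$-colouring by giving $v$ a third colour yields a proper $3$-colouring of $G$, so $\chi(G)\le 3$; together with $\chi(G)>2$ this gives $\chi(G)=3$. I would then record the useful consequence of vertex-criticality that every \emph{proper} induced subgraph $H$ of $G$ satisfies $\chi(H)\le 2$: indeed $H$ is obtained from $G$ by deleting a nonempty vertex set one vertex at a time, the first deletion already brings the chromatic number down to $2$ by hypothesis, and subsequent deletions cannot raise it.

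The structural heart of the argument comes next. Since $\chi(G)=3>2$, the graph $G$ is not bipartite, hence contains an odd cycle $C$. If $V(C)$ were a proper subset of $V(G)$, then $G[V(C)]$ would be a proper induced subgraph of $G$ containing an odd cycle, so $\chi(G[V(C)])\ge 3$, contradicting the previous step; hence $C$ is a spanning cycle of $G$. Finally, suppose $G$ contained an edge $xy\notin E(C)$. Since $G$ is simple, $x$ and $y$ are non-adjacent along $C$, so the two arcs of $C$ joining $x$ and $y$ have lengths $a,b\ge 2$ with $a+b=|V(C)|$ odd; then exactly one of the two cycles formed by an arc together with the edge $xy$ is odd, and it has length at most $|V(C)|-1$. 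The induced subgraph on that shorter odd cycle's vertex set is a proper induced subgraph of $G$ with chromatic number $\ge 3$, again a contradiction. Therefore $G$ has no chord of $C$, so $G=C$ is an odd cycle.

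The step that needs the most care is the deduction that a vertex-critical $3$-chromatic graph is an odd cycle — in particular, making sure the odd cycles invoked (the initial $C$, and the shorter odd cycle obtained from a putative chord) genuinely live on strictly fewer than $|V(G)|$ vertices, so that criticality applies. The key small observation is that a chord of $C$ cannot join two consecutive vertices of $C$ (otherwise it would already belong to $E(C)$), so both arcs have length at least $2$ and both cycles produced are strictly shorter than $C$. One could instead simply cite the classical characterisation of $3$-vertex-critical graphs, as the statement itself hints; the argument above is an equally short self-contained alternative.
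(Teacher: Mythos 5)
Your proof is correct, and it diverges from the paper's only in the second half. The first step is identical: extend a proper $2$-colouring of $G\setminus\{v\}$ by assigning $v$ a third colour to get $\chi(G)\le 3$, hence $\chi(G)=3$. At that point the paper simply cites the classical fact that odd cycles are the only vertex-critical $3$-chromatic graphs, whereas you prove it: you first record that every proper induced subgraph of $G$ is bipartite (delete one vertex to drop the chromatic number to $2$, and note monotonicity under further deletions), deduce that any odd cycle $C$ of the non-bipartite graph $G$ must be spanning, and then exclude chords by observing that a chord joins non-consecutive vertices of $C$, so both arcs have length at least $2$ and exactly one of the two resulting cycles is odd and strictly shorter than $C$; its vertex set would induce a non-bipartite proper induced subgraph, a contradiction. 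All the details check out, including the parity count (the two cycle lengths sum to $|V(C)|+2$, which is odd) and the fact that the shorter odd cycle really omits a vertex so that criticality applies. The trade-off is the usual one: the paper's version is two sentences resting on a standard cited characterization, while yours is self-contained at the cost of a paragraph; either suffices for the way the observation is used later, namely to identify the complement of the intersection graph of a minimal non-$2$-pierceable family as an odd cycle.
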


\begin{obs}\label{obsi}
By Observation \ref{2cricro} and Proposition \ref{2hp}, if there is a family of $p$--boxes $\mathcal{F}$ in $\mathbb{R}^d$ such that any subset of $\mathcal{F}$ has piercing number $2$, but $\mathcal{F}$ has a greater piercing number, then the piercing number of $\mathcal{F}$ is 3, and $G_{\mathcal{F}}^c$ is an odd cycle and vice versa. 
\end{obs}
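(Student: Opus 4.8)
The plan is to translate the whole statement into the language of chromatic numbers via Proposition \ref{2hp}, and then invoke Observation \ref{2cricro}, which characterizes odd cycles as the only $3$-critical graphs. Throughout I would write $G := G_{\mathcal{F}}^c$ for the complement of the intersection graph of $\mathcal{F}$, and I would first record the one bookkeeping identity on which the argument rests: complementation commutes with passing to induced subgraphs. Concretely, if $\mathcal{F}' = \mathcal{F}\setminus\{X_v\}$ is obtained by deleting the box $X_v$ corresponding to a vertex $v$, then the intersection graph of $\mathcal{F}'$ is the induced subgraph $G_{\mathcal{F}}\setminus\{v\}$, and since $(H[S])^c = H^c[S]$ for any graph $H$ and vertex set $S$, its complement is exactly $G\setminus\{v\}$. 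This is what lets me convert "remove a box" into "remove a vertex of $G$."

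For the forward direction, assume every proper subfamily of $\mathcal{F}$ has piercing number $2$ while $\mathcal{F}$ itself has piercing number greater than $2$. Applying Proposition \ref{2hp} to each subfamily $\mathcal{F}' = \mathcal{F}\setminus\{X_v\}$ and using the identity above gives $\chi(G\setminus\{v\}) = 2$ for every vertex $v$, while Proposition \ref{2hp} applied to $\mathcal{F}$ itself gives $\chi(G) > 2$. These are precisely the hypotheses of Observation \ref{2cricro}, so I conclude $\chi(G) = 3$ and that $G$ is an odd cycle. A final application of Proposition \ref{2hp} turns $\chi(G) = 3$ back into the conclusion that the piercing number of $\mathcal{F}$ equals $3$.

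For the converse, suppose $G = G_{\mathcal{F}}^c$ is an odd cycle. Every odd cycle has chromatic number $3$, so Proposition \ref{2hp} gives at once that $\mathcal{F}$ has piercing number $3$. Any proper subfamily deletes at least one vertex from the odd cycle, and an odd cycle with a vertex removed is a disjoint union of paths, hence bipartite; its chromatic number is therefore at most $2$ (and exactly $2$ once it contains an edge). By Proposition \ref{2hp} each such subfamily has piercing number $\le 2$, so $\mathcal{F}$ is a minimal family that fails to be $2$-pierceable, as claimed.

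Since the two substantive ingredients, Proposition \ref{2hp} and the $3$-criticality of odd cycles in Observation \ref{2cricro}, are already available, I do not expect a genuine obstacle here; the statement is essentially a dictionary translation. The only point deserving care is the commuting identity between complementation and induced subgraphs, together with the fact that Observation \ref{2cricro} requires $\chi(G\setminus\{v\}) = 2$ for \emph{every} $v$ (not merely $\le 2$); this is exactly what the hypothesis that every proper subfamily has piercing number $2$ supplies, so the two results dovetail cleanly.
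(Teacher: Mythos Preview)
Your proposal is correct and follows exactly the approach the paper intends: the observation is stated in the paper without proof, simply as a consequence of Proposition \ref{2hp} and Observation \ref{2cricro}, and your argument spells out precisely that translation. The only minor remark is that the paper's phrase ``any subset of $\mathcal{F}$ has piercing number $2$'' must be read as ``any proper subfamily is $2$-pierceable,'' as you do; your care with the $\chi(G\setminus\{v\}) = 2$ versus $\le 2$ distinction is warranted but ultimately harmless, since $\chi(G) > 2$ forces $G$ to have edges not all incident to a single vertex.
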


Now we are ready to prove Theorem \ref{flatboxes}.

 \begin{proof}  (Theorem \ref{flatboxes}) We assume $V(K_m):=\{v_1,v_2,\dots ,v_m\}$. \\
For $m=5$ define $F_1$ and $F_2$ to be two graphs with the same vertices as $K_5$ and with $E(F_1):=E(K_5)\setminus \lbrace (v_3,v_4),(v_4,v_5),( v_5,v_1) \rbrace$ and $E(F_2):=E(K_5)\setminus \lbrace (v_1,v_2) ,( v_2,v_3)\rbrace$. Observe that $F_1\cap F_2=C^c_5$, and since $F_1,F_2$ are chordal they are interval graphs. We observe that  $C^c_5$ has the $1$-slim property in $\mathbb{R}^2$ and therefore is realizable as 1--boxes in $\mathbb{R}^2$ (see the realization of this family as $1$--boxes in Figure \ref{figc5}). So there exists a family of 1--boxes in $\Rd$, $d>2$, such that any 4 elements in the family have piercing $2$ but the whole family has piercing $3$. Hence $h(d,1,2)\geq 5$. 

\begin{figure}
\begin{center}
\includegraphics[scale=.55]{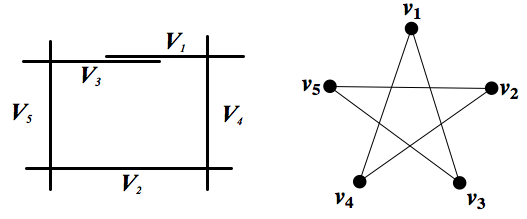}
\end{center}
\caption{Realization of $C^c_5$ as 1-boxes in $\mathbb{R}^2$}\label{figc5}
\end{figure}  

By Lemma \ref{C7}, the cycle $C_{s}$ with $s\geq 7$ is 1-forbidden in $\mathbb{R}^d$ for any $d>m$. Thus there cannot be a family of intervals in $\mathbb{R}^d$ such that its intersection graph  $C^c_{s}$ has the $1$--slim property. It follows from this and Observation \ref{obsi} that $h(d,1,2)\leq 5$.\\

For $n=7$, let $F_{1}$, $F_2$ and $F_3$ be three graphs with vertices $V(F_i)=V(K_7)$ for $i:=\{1,2,3\}$
and edges $E(F_1)=E(K_7)\setminus \lbrace(v_1,v_2),(v_2,v_3)\rbrace$, $E(F_2)=E(K_7)\setminus \lbrace(v_3,v_4),(v_4,v_5),(v_5,v_6)\rbrace$, and $E(F_3)=E(K_7)\backslash\lbrace(v_6,v_7),(v_7,v_1)\rbrace$.
Observe that $F_1\cap F_2\cap F_3=C^c_7$ and since $F_1,F_2$ and $F_3$ are chordal, they are interval graphs and thus $C^c_7$ satisfies the  $2$-slim box property in 
$\mathbb{R}^3$. Therefore $C^c_c=$ is realizable as 1--boxes in $\mathbb{R}^3$.  So there exists a family of 2--boxes in $\Rd$, $d>3$, such that any 6 elements in the family have piercing 2 but the whole family has piercing $3$ (see the realization of this family as $2$--boxes in Figure \ref{figc7}). Hence $h(d,2,2)\geq 7$. 

\begin{figure}
\begin{center}
\includegraphics[scale=.5]{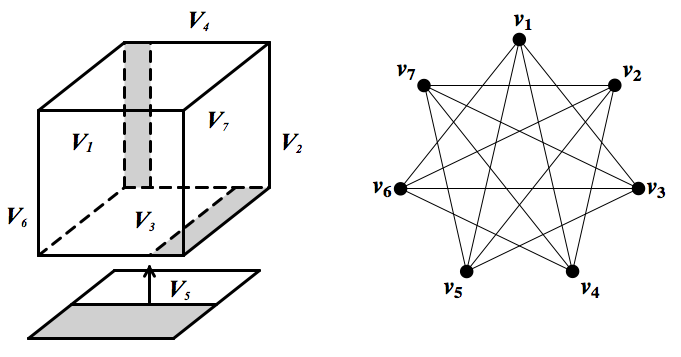}
\end{center}

\caption{Realization of $C^c_7$ as 1-boxes in $\mathbb{R}^3$, where all the facets of the cube are elements of the family except for the gray area that is missing in each of the facets. Each facet is labeled with the corresponding capital letter of the graph at the right. The facet on the $xz$-axes corresponds to $V_1$, the facet parallel to it to the right $V_2$, the top $V_4$, front $V_6$, and back $V_7$. There are two elements on the bottom ($xy$-axes), $V_3$ drawn in the cube and $V_5$ slightly below for clarification.   }\label{figc7}
\end{figure}

By Lemma \ref{C9}, the cycle $C_{s}$ with $s\geq 9$ is 2-forbidden in $\mathbb{R}^d$ for any $d>m$. Thus there cannot be a family of $2$-boxes in $\mathbb{R}^d$ such that its intersection graph is $C^c_{s}$, and along with Observation \ref{obsi} this yields $h(d,2,2)\leq 7$.\\

By Theorem \ref{DanzGrum} it is clear that $h(m,m,2)\geq 3m$ for $m$ odd and $h(m,m,2)\geq 3m-1$  for even $m$.
By Lemma \ref{Cs}, the cycle $C_{s}$  with $s\geq 3m+2$ is $m$-forbidden in $\mathbb{R}^d$ for any $m$ odd, $d>m>2$, and the cycle $C_{s}$  with $s\geq 3m+1$ is $m$-forbidden in $\mathbb{R}^d$ for any $m$ even, $d>m>2$.  Thus
 there cannot be a family of $p$--boxes in $\mathbb{R}^d$ such that its intersection graph is $C^c_{s}$. Applying Observation \ref{obsi}, the proof is complete. \end{proof}

%------------------------------ Acks -------------------------------------
\subsection*{Acknowledgements.}
The first author wish to acknowledge partial support  by a research assistantship, funded by
the National Institutes of Health grant R01 GM117590. 
The second author appreciate the hospitality of the department of Mathematics at U.C. Davis, support from scholarship PASPA (UNAM) and CONACYT during her sabbatical visit,  as well as acknowledge support by proyecto PAPIIT 104915, 106318 and CONACYT Ciencia B\'asica 282280.

% % % % % % % % % % % % % % % % % % % % % % % % % % % % % % % % % % % % % % % % % % % %

 \end{document}